\documentclass[twocolumn]{autart}  
\usepackage{graphicx}          
\usepackage{amsmath}
\usepackage{amssymb}
\usepackage{mathtools}
\usepackage{bm}
\usepackage{bbm}
\usepackage{mathrsfs}
\usepackage[sort&compress,square,numbers]{natbib}

\usepackage{xcolor}

\usepackage[normalem]{ulem}

\usepackage{enumerate}
\usepackage{enumitem}

\usepackage{subcaption}

\usepackage{amsfonts}

\usepackage[mathscr]{euscript}

\usepackage{url}

\usepackage{enumerate}

\usepackage{graphicx}
\graphicspath{{./}{./photos}{./figures}}

\usepackage[noadjust]{cite} 

\usepackage{algorithm}
\usepackage{algorithmicx}
\usepackage[noend]{algpseudocode}
\algrenewcommand\algorithmicrequire{\textbf{Input:}}
\algrenewcommand\algorithmicensure{\textbf{Output:}}

\newtheorem{theorem}{Theorem}
\numberwithin{theorem}{section}
\newtheorem{lemma}[theorem]{Lemma}
\newtheorem{proposition}[theorem]{Proposition}
\newtheorem{remark}[theorem]{Remark}

\newtheorem{assumption}[theorem]{Assumption}
\newtheorem{corollary}[theorem]{Corollary}

\newtheorem{problem}[theorem]{Problem}

\let\oldpf\pf
\let\oldendpf\endpf
\def\pf{\begingroup \oldpf}
\def\endpf{\hfill~\qed \oldendpf \endgroup}

\expandafter\let\csname endpf*\endcsname=\endpf

\newenvironment{proof}[1][Proof.]{\begin{pf*}{\MakeUppercase{#1}}}{\end{pf*}}

\newcommand{\mR}{\mathbb{R}}

\newcommand{\mK}{\mathbb{K}}

\DeclareMathOperator*{\st}{s.\hspace{-2pt}t.}

\DeclareMathOperator*{\diag}{diag}

\DeclareMathOperator*{\mE}{\mathbb{E}}
\DeclareMathOperator*{\mP}{\mathbb{P}}

\DeclareMathOperator*{\support}{supp}

\DeclareMathOperator*{\tv}{T.V.}
\DeclareMathOperator*{\lip}{Lip}
\DeclareMathOperator*{\diam}{diam}

\newcommand{\thetatrue}{\bar{\theta}}
\newcommand{\Vtrue}{\bar{V}}

\newcommand{\elltrue}{\bar{\ell}}
\newcommand{\pitrue}{\bar{\pi}}

\newcommand{\mfx}{\bm{x}}

\newcommand{\mfw}{\bm{w}}
\newcommand{\mfu}{\bm{u}}

\newcommand{\mfa}{\bm{a}}
\newcommand{\mfT}{\bm{T}}

\newcommand{\proj}{\mathscr{P}}
\newcommand{\indicator}{\mathbb{I}}
\newcommand{\contfun}{\mathcal{C}}

\allowdisplaybreaks

\edef\endfrontmatter{
  \unexpanded\expandafter{\endfrontmatter}
  \noexpand\endNoHyper 
}

\begin{document}

\begin{frontmatter}
  \title{Consistent inverse optimal control\\ for discrete-time nonlinear stochastic systems}

  \thanks[footnoteinfo]{This work was supported by National Natural Science Foundation (NNSF) of China under Grant 62573286, by Natural Science Foundation of Shanghai under Grant 25ZR1401208, by the Wallenberg AI, Autonomous Systems and Software Program (WASP) funded by the Knut and Alice Wallenberg Foundation, and the Swedish Research Council (VR) under grant 2024-05776.
  }
  \author[SJTU,IMR]{Ziliang Wang}\ead{qingyingyese@sjtu.edu.cn},
  \author[SJTU,IMR]{Han Zhang}\ead{zhanghan\_tc@sjtu.edu.cn},
  \author[CHALMERS_AND_GU]{Axel Ringh}\ead{axelri@chalmers.se}
  
  \address[SJTU]{School of Automation and Intelligent Sensing, Shanghai Jiao Tong University, Shanghai, China}
  \address[IMR]{Institute of Medical Robotics, Shanghai Jiao Tong University, Shanghai, China}
  \address[CHALMERS_AND_GU]{Department of Mathematical Sciences, Chalmers University of Technology and University of Gothenburg, 41296 Gothenburg, Sweden}  

  \begin{keyword}
Inverse optimal control, nonlinear system, stochastic system, system identification, sum-of-squares optimization.
  \end{keyword}

  \begin{abstract}
    
    Inverse Optimal Control (IOC) seeks to recover an unknown cost from expert demonstrations, and it provides a systematic way of modeling experts' decision mechanisms while considering the prior information of the cost functions. Nevertheless, existing IOC methods have consistency issue with the estimator under noisy and nonlinear settings. 
    
    In this paper, we consider a discrete-time nonlinear system with process noise, and it is controlled by an optimal policy that minimizes the expectation of a discounted cumulative cost function across an infinite time-horizon.
    In particular, the cost function takes the form of a linear combination of a priori known feature functions. 
    In this setting, we first adopt Lasserre's reformulation \citep{hernandez2012discrete} of the forward problem with occupancy measure. Next, we propose the infinite dimensional IOC algorithm and further approximate it with Lagrange interpolating polynomials, which results in a convex, finite-dimensional sum-of-squares optimization. 
    Moreover, the estimator is shown to be asymptotically and statistically consistent.
    Finally, we validate the theoretical results and illustrate the performance of our method with numerical experiments. In addition, the robustness and generalizability performance of the proposed IOC algorithm are also illustrated.
  \end{abstract}

\end{frontmatter}

\section{Introduction}\label{sec:introduction}

Optimal Control (OC) provides a principled framework for designing a control law by specifiying it to be the minimizer of a specified performance index. 
However, the performance of such controller in the contextual environment critically depends on the choice of the cost function, which in general trade-offs between the control effort and the feedback system's performance. In practice, accurate specification of such cost function is often nontrivial; it requires expertise and may not reflect the true objectives of the expert's decisions.
Inverse optimal control (IOC), also known as Inverse Reinforcement Learning (IRL), addresses this issue by inferring the underlying cost function of an expert from demonstration data. 
Therefore, it not only provides valuable insights for controller design, but also further enables the reconstruction of the corresponding closed-loop policy \citep{dey2023inverse, mombaur2010human}.

Compared with Imitation Learning (IL), which directly imitates the observed expert behavior \citep{zare2024survey}, IOC assumes that the expert's cost function follows a known structure, such as a linear combination of predefined feature functions \citep{ziebart2008maximum,zhang2019inverse,garrabe2025convex}.
This assumption restricts the optimal policy's possible structure and provides stronger robustness and generalizability.
Motivated by this, IOC methods have been successfully applied to modeling the decision criteria in various domains such as autonomous driving \citep{likmeta2021dealing}, human sensorimotor \citep{Schultheis2021Inverse}, and animal behaviours \citep{Ashwood2022Dynamic}.
However, for nonlinear stochastic systems, the corresponding IOC problems are challenging due to the absence of explicit expressions for the value functions.
This leads to ``coupled-grey-black-box" identification problems, compared to the grey-box case in Linear Quadratic (LQ) settings \citep{zhang2019inverse,zhang2019inverseCDC,li2018convex,zhang2021inverse,zhang2022statistically,yu2021system, Menner2018Convex, rickenbach2024inverse, guo2023imitation, zhang2024statistically, zhang2024inverse}.
Such problem still remains even when the cost function structure is known a priori. 
Moreover, the trajectory observations are usually contaminated by noise in practice. In particular, the noise could be either the observation noise introduced by the sensor measurements, or the process noise that is intrinsically modelled by the Markov transition kernel of the nonlinear stochastic system.
Therefore,  a key property that an IOC estimator should have to guarantee the robustness is consistency.
Nevertheless, if the IOC estimator is an ``M-estimator" \citep[Chap.~5]{van1998asymptotic}, consistency is typically only guaranteed at the globally optimal solution of the underlying optimization problem. Therefore,  it is also desirable that the this optimization is convex, as to avoid issues with local minima and thus achieve consistency in practice.

\subsection{Related works}
One way to avoid the aforementioned ``coupled-grey-black-box" issue is to identify a cost function that satisfies the first-order necessary optimality conditions given by Pontryagin's Maximum Principle (PMP) \citep{Liang2023DataDriven}.
However, such algorithms only recovers a cost function to which the observed trajectories is only locally optimal, and hence we might not be able to guarantee the feedback system's performance when using the control policy recovered from the estimated cost function.
Another straightforward method is to minimize the discrepancy between the expert's policy and the optimal policy induced by the candidate cost functions \citep{abbeel2004apprenticeship, ziebart2008maximum, Ho2016GAIL}.
Such methods often require repeatedly solving the entire or part of the forward optimal control problem, which makes them computationally demanding and less scalable to complex systems.
Nevertheless, by exploiting the specific structures of the system dynamics or the cost function, several studies have avoided the issue of solving the forward problem repeatedly.
In particular, bilinear \citep{fernandez2025estimating}, affine \citep{lian2022inverse, Jean2019Injectivity, Rodrigues2022Inverse}, and polynomial dynamical systems \citep{pauwels2016linear} have been investigated, where certain problem structures enable tractable reformulations. Yet more general nonlinear settings are not considered therein.
Furthermore, another line of research incorporates statistical learning principles.
Inspired by maximum-entropy regularization, a family of methods recovers the cost function by maximum-likelihood estimation under stochastic expert policies \citep{ziebart2008maximum, dvijotham2010inverse, garrabe2025convex, Mehr2023Maximum, Guo2021Deep}.
In addition, \citep{ziebart2008maximum, zhou2018Infinite} discretize the state and control spaces, while others represent the stochastic policies with exponential-family distributions \citep{dvijotham2010inverse, garrabe2025convex, Mehr2023Maximum} or deep neural networks \citep{Guo2021Deep}.
Nevertheless, the impact of these approximations on estimation accuracy remains theoretically unquantified.

\subsection{The contributions}
To address the above challenges, we propose an IOC algorithm for discrete-time infinite-horizon nonlinear stochastic systems with a discounted-cost, where the running cost is assumed to be a linear combination of known continuous feature functions.
In particular, the method recovers the underlying cost function by solving a Sum-Of-Square (SOS) programming problem, which is convex and well-studied. 
Compared to existing approaches, our method
(i) handles the nonlinear and stochastic dynamics without restrictive structural assumptions;
(ii) avoids solving the forward optimal control problem repeatedly; and 
(iii) provides theoretical guarantees of asymptotic and statistical consistency, which have not been established simultaneously in existing works.
More precisely, our key contributions are as follows:
\begin{enumerate}[label=\roman*.]
    \item We reformulate the nonlinear, infinite-horizon IOC problem as an infinite dimensional linear programming problem, which minimizes the violation of optimality conditions for the observed trajectories. We further show that samples of finite-time observation suffice to identify the cost function parameters. By applying a polynomial approximation, we obtain a finite dimensional SOS program that can be solved numerically. Such method also exempts us from local minima issues.
    \item We establish asymptotic and statistical consistency of the proposed estimator. Namely, as the polynomial approximation order and the number of demonstration trajectories increases, the optimality gap between the expert policy and the estimated optimal policy vanishes, and the estimated parameters approach the true ones whenever the solution is unique. Moreover, the underlying optimal control policy can be further consistently recovered, e.g., by integrating the estimated optimality condition into ``behaviour cloning''.
    \item Numerical experiments are conducted to validate the theoretical consistency of the proposed method, demonstrate its estimation accuracy, and evaluate its robustness and generalizability.
\end{enumerate}

The remainder of this paper is organized as follows.
Sec.~\ref{sec:problem_formulation} introduces the problem setting, states several mild assumptions on the considered system, and formulates the IOC problem for discrete-time nonlinear stochastic systems.
Next, we present the proposed approach in Sec.~\ref{sec:alg_construct}. In particular, the unknown functions in the IOC formulation are approximated by polynomials, and the problem is converted into a finite-dimensional SOS program --- this is the finite-dimensional estimator.
Moreover, the theoretical guarantees, including asymptotic and statistical consistency of the estimator, are established in Sec.~\ref{sec:consistency}.
In addition, we briefly describes the SOS optimization procedure used in practice in Sec.~\ref{sec:practical_implementation}. 
In Sec.~\ref{sec:numerical-expe}, we evaluate perform  numerical experiments and the estimator on three benchmark systems: a LQ regulator, a temperature-control process, and an inverted pendulum. In the inverted pendulum problem, we also compare the controllers obtained via IOC with ``behaviour cloning'' (see, e.g., \citep{sammut2017behavioral, codevilla2019exploring, torabi2018behavioralcloningobservation}) and show that IOC gives controllers that are more robust to distributional shifts and errors in the modelled dynamics.
Finally, the paper is concluded in Sec.~\ref{sec:conclusions} with some discussion and outlooks.

\textit{Notation}: 
$\mR_+$ denotes the non-negative real numbers. $C(n,k)$ denotes the binomial coefficient (``$n$ choose $k$").
We denote $\mathcal{M}(A)$ as the space of all finite signed measures on $A$, and let $\support(\mu)$ denote the support of $\mu\in\mathcal{M}(A)$, while $\mathscr{F}(A)$ denotes the space of all bounded measurable functions on $A$. In addition, let $\contfun(A)$ denote the set of continuous functions whose domain is $A$. 
Furthermore, we denote $\|f\|_{p,A}:=\left(\int_A\left|f(x)\right|^p\mathrm{d}x\right)^{\frac{1}{p}}$ as the $p$-norm of the bounded function $f$ on the set $A\subset \mR^n$. In both this and the previous notation, we sometimes drop the set $A$ when there is no risk of confusion.
For vector valued function $f:\mR^n\rightarrow\mR^n$, $\|f\|_{\infty,A}:=\max_{i=1:n}\|f_i\|_{\infty,A}$, where $f_i$ is the $i$-th element of $f$.
Similarly, $\|v\|_p:=\left(\sum_{i=1}^n|v_i|^p\right)^{\frac{1}{p}}$ for $v=[v_1,\ldots,v_n]^T\in\mR^n$.
On the other hand, we denote $\|\mu\|_{\tv}:=\sup_{\|v\|_\infty\leq 1}\int v(x) \mu(\mathrm{d}x)$ as the total variation norm.
We also denote $\lip(\beta)\subset \mathcal{C}(A)$ as the set of all Lipschitz continuous functions on $A$ that has the Lipschitz constant $\beta$.
We further denote $\mR[x]$ as the ring of multivariate polynomials of $x\in\mR^n$ and $\Sigma^2$ as the SOS polynomials. More specifically, $p\in\Sigma^2$ if there exists an integer $m$ and $u_i\in\mR[x]$, such that $p=\sum_{i=1}^m u_i^2$.
Moreover, we define the Cartesian product as $\times_{i=1}^n A_i := A_1\times \ldots \times A_n$.
$\langle\cdot,\cdot\rangle$ denotes the dual bracket between the dual pair $(\cdot,\cdot)$.
Moreover, for linear operator $\mathscr{G}$, we let $\mathscr{G}^*$ denote its adjoint.
For any class $B\subset \mR^n$, the diameter is defined as $\diam(B):=\sup_{x,y\in B}\|x-y\|_2$.
In addition, $\indicator_{\Gamma}(x)$ denotes the indicator function of $x$ on set $\Gamma$, namely,
\[
\indicator_\Gamma(x) = 
\begin{cases}
1 &\mbox{if } x\in\Gamma,\\
0 &\mbox{otherwise}.
\end{cases}
\]
Moreover, we use $\textbf{\textit{italic bold font}}$ to denote stochastic elements, and $\mfx \sim \mu$ means that $\mfx$ has distribution $\mu$. Specifically, $\mathcal{N}(b, \sigma^2)$ denotes normal distribution with mean $b$ and variance $\sigma^2$ and $\mathcal{N}_C(b, \sigma^2)$ denotes the truncated normal distribution on the hyper-cube $C$. $\mathcal{U}(A)$ denotes uniform distribution on $A$.

\section{Problem formulation}\label{sec:problem_formulation}

Suppose that an agent's behaviour is governed by a Markov control model $(X,A,\{A(x) \mid x\in X\},Q,\elltrue)$ \citep[Def.~2.2.1]{savorgnan2009discrete}, where
\begin{itemize}
\item $X\subset\mR^{n_x}$, $A\subset\mR^{n_a}$ are the Borel space for state and action, respectively;
\item $\{A(x) \mid x\in X\}\subset A$ is the set of feasible control. Further, let $\mK:=\{(x,a) \mid x\in X,a\in A(x)\}\subset X\times A$ be the set of feasible state-action pairs;
\item $Q:(\mathcal{B}(X)\times \mK)\rightarrow [0,1]$ is the transition kernel;
\item $\elltrue:\mK\rightarrow\mR$ is the running cost.
\end{itemize}
Further, let $(\Omega,\mathcal{F},\mP)$ be the probability space that carries the state-action sequence $\{(\mfx_t,\mfa_t)\}_{t=1}^\infty$.
We assume that the agent make its decision by seeking to optimize the following infinite-horizon expected total discounted cost
\begin{align}
\min_{\pi}J(\pi;\mu_1)=\mE\left[\sum_{t=1}^\infty\alpha^{t-1}\elltrue(\mfx_t,\mfa_t)\right],\label{eq:forward_problem_obj_fun}
\end{align}
where $\alpha\in(0,1)$ is the decay constant, and the initial value $x_1$ is distributed according to $\mu_1$ on $X$.
In particular, the optimal policy $\bm{\pi}=\{\pi_t\}$ is a sequence of stochastic kernels $\pi_t:A\times (\mK^{t-1}\times X)\rightarrow [0,1]$, and the action $a_t$ that the agent taken at each time step $t$ is sampled from the distribution $\pi_t(a|x_1,a_1,\cdots,x_{t-1},a_{t-1},x_t)$.

To proceed, we make the following assumptions.
\begin{assumption}[Markov transition kernel]\label{ass:transition_kernel}
 The Markov transition kernel is a priori known. In addition, the Markov transition kernel $Q(\cdot|x,a)$ is Lipschitz continuous in total-variation distance, i.e., there exists $L_Q\ge 0$ such that for all $\eta_1:=(x_1,a_1)\in\mathbb{K},\eta_2:=(x_2,a_2)\in\mathbb{K}$, 
\begin{align*}
\|Q(\cdot|\eta_1)-Q(\cdot|\eta_2)\|_{\tv}\le L_Q\|\eta_1-\eta_2\|_2.
\end{align*}
\end{assumption}
\begin{assumption}[State-action set hyper-cube]\label{ass:compact_support}
The state-action set $X\times A$ is a hyper cube, namely, $X=\times_{i=1}^{n_x}[x_{i,\min},x_{i,\max}]$ and $A = \times_{i=1}^{n_a}[a_{i,\min},a_{i,\max}]$. Moreover, for all $x\in X$, it hold for the feasible action set $A(x)$ that $A(x)=A$. Consequently, $\mK = X\times A$.
\end{assumption}
\begin{assumption}[Cost function]\label{ass:obj_func}
  The cost function $\elltrue$ has prior structures in the sense that it belongs to a function class that can be parameterized linearly by $\thetatrue_\ell\in\mR^{n_\ell}$, i.e., $\elltrue(\cdot,\cdot) = \thetatrue_\ell^T\varphi(\cdot,\cdot)$, where $\varphi(\cdot,\cdot) = [\varphi_1(\cdot,\cdot),\ldots,\varphi_{n_\ell}(\cdot,\cdot)]^T$ is the vector of (known) $L_\varphi$-Lipschitz continuous feature functions, $\elltrue \in \lip(L_\ell)$ where $L_\ell = \|\thetatrue\|_1 L_\varphi$.
\end{assumption}
Moreover, under the Assumptions \ref{ass:transition_kernel}, \ref{ass:compact_support}, \ref{ass:obj_func}, we can further reformulate the problem of optimizing \eqref{eq:forward_problem_obj_fun} as a linear optimization problem in the infinite-dimensional space of measures \citep[Chap.~6]{hernandez2012discrete}.
In particular, given a policy $\bm{\pi}$ and the initial value's distribution $\mu_1$, the state-control occupation measure at time step $t$ is defined as
\begin{align*}
  \mu_t^\pi(\Gamma_x\times \Gamma_a) :=\mE[\indicator_{\Gamma_x\times \Gamma_a}(\mfx_t,\mfa_t)],\quad \forall (\Gamma_x\times\Gamma_a)\subset\mK.
\end{align*}
In addition, let $\proj:\mathcal{M}_+(\mK)\rightarrow\mathcal{M}_+(X)$ be the linear projection operator that transform any probability measure $\mu\in\mathcal{M}_+(\mK)$ to its marginal distribution $\tilde{\mu}\in\mathcal{M}_+(X)$ on the state. 
Moreover, for all $\Gamma_x\subset X$, let $q:\mathcal{M}_+(\mK)\rightarrow\mathcal{M}_+(X)$ be the linear forward time shift operator defined by
\begin{align*}
  (q\mu_t^\pi)(\Gamma_x) = \int_{\mK} Q(\Gamma_x|x,a)\mu_t^\pi(\mathrm d x,\mathrm d a).
\end{align*}
Therefore, in view of the definition of the transition kernel $Q$ and the state-control occupation measure, it is clear that $\proj\mu_{t+1}^\pi = q\mu_t^\pi$. Let us further define the following occupation measure
\begin{align}\label{eq:mu^pi_def}
  \mu^\pi:=\sum_{t=1}^\infty \alpha^{t-1} \mu_t^\pi.
\end{align}
Equipped with the set-up above, we can reformulate the the problem of optimizing \eqref{eq:forward_problem_obj_fun} as
\begin{subequations}\label{eq:forward_problem_measure_form}
\begin{align}
  \min_{\mu^\pi\in\mathcal{M}_+(\mK)} &\;\;\langle \elltrue,\mu^\pi\rangle \label{eq:forward_problem_measure_form_obj}\\
  \st &\;\;(\proj-\alpha q)\mu^\pi = \mu_1.\label{eq:forward_problem_measure_form_dynamics}
\end{align}
\end{subequations}
In fact, the Assumptions \ref{ass:transition_kernel}, \ref{ass:compact_support}, \ref{ass:obj_func} guarantees that \eqref{eq:forward_problem_measure_form} is equivalent to optimizing \eqref{eq:forward_problem_obj_fun} \citep[Thm.~6.3.7]{hernandez2012discrete} in the sense that there is an equivalence between the feasible points of the two problems and that they both attains the same optimal value.
\footnote{More specifically, Assumption \ref{ass:transition_kernel}, \ref{ass:compact_support} and \ref{ass:obj_func} together imply \citep[Ass.~4.2.1]{hernandez2012discrete}.}
The equivalence between \eqref{eq:forward_problem_obj_fun} and \eqref{eq:forward_problem_measure_form} means that we can consider the latter formulation when we consider the IOC problem to come. 

To this end, the optimality conditions for the above problem are given in \eqref{eq:opt_cond_inf_dim}, and they can derived as follows. We first relax the constraint \eqref{eq:forward_problem_measure_form_dynamics} with a multiplier $\Vtrue \in \mathscr{F}(X)$, and get the Lagrangian $\mathcal{L} : \mathcal{M}_+(\mK) \times \mathscr{F}(X) \rightarrow \mR$
\begin{align*}
\mathcal{L}(\mu^\pi, \Vtrue) & = \langle \elltrue,\mu^\pi\rangle - \langle \Vtrue, (\proj-\alpha q)\mu^\pi -\mu_1 \rangle \\
& =  \langle \elltrue + (\alpha q^*-\proj^*)\Vtrue,\mu^\pi\rangle + \langle \Vtrue, \mu_1 \rangle.
\end{align*}
Considering the first term in the above equation, we note that $\inf_{\mu^\pi\in\mathcal{M}_+(\mK)}  \mathcal{L}(\mu^\pi, \Vtrue) > -\infty$ only if 
\begin{subequations}\label{eq:opt_cond_inf_dim}
\begin{equation}
\elltrue+(\alpha q^*-\proj^*)\Vtrue\ge 0. \label{eq:bellman_inequality}
\end{equation}
On the other hand, since the above equation holds, the infimum of the Lagrangian is attained for any $\mu^\pi$ such that
\begin{equation}\label{eq:complementary_slackness}
  \langle \elltrue + (\alpha q^*-\proj^*)\Vtrue,\mu^\pi\rangle = 0,
\end{equation}
for example $\mu^\pi = 0$. However, the latter is not primaly feasible, which gives us the last optimality condition, namely
\begin{align}
  \eqref{eq:forward_problem_measure_form_dynamics},\quad \mu^\pi\in\mathcal{M}_+(\mK).\label{eq:primal_feasible}
\end{align}
\end{subequations}
In these equations, the adjoint operators are defined by $q^*\Vtrue:\mK\mapsto\mR$ and $\proj^* \Vtrue:\mK\mapsto\mR$ via
\begin{subequations}  \label{eq:adjoints}
  \begin{align}
    (q^*\Vtrue)(x,a) &=\int_X \Vtrue(\chi)Q(d\chi|x,a)\label{eq:cost_to_go}\\
    (\proj^*\Vtrue)(\mfx,\mfu) &= \Vtrue(\mfx)\indicator_{\mK}(\mfx,\mfa)=\Vtrue(\mfx).\label{eq:value_func}
  \end{align}
\end{subequations}
We further assume that as an observer, we can observe $M$ trials of the agent.  More precisely, let the trajectories $(\mfx_t^i, \mfa_t^i)_{t=1}^{N}$  be Independently Identically Distributed (I.I.D) over $i=1:M$. The observed trials $\{(x_t^i,a_t^i)\}_{i=1}^M$ are the realizations of the I.I.D state-action pairs $\{(\mfx_t^i,\mfa_t^i)\}_{i=1}^M$.
Moreover, we make the following assumptions on the initial value distribution $\mu_1$ and the observed agent's optimal actions.

\begin{assumption}[Initial value distribution]\label{ass:init_value}
The support of the initial value distribution $\mu_1$ is $X$.
\end{assumption}

\begin{assumption}[Deterministic stationary policy]\label{ass:stationary_markov_policy}
The observed agent's optimal actions $\{a_t^i\}_{i=1}^M$ for all $t=1:N$ are generated from a deterministic stationary policy \citep[Def.~2.3.2]{hernandez2012discrete}. Namely, it holds for the optimal policy $\bm{\pitrue}=\{\pitrue_t\}$ that $\pitrue_t(a|x_1,a_1,\cdots,x_{t-1},a_{t-1},x_t) = \delta(\pitrue(x_t))$, which is the Dirac measure concentrated at the measurable function $\pitrue:X\rightarrow A$.
\end{assumption}

With the above set-up, we are ready to present the problem that is considered in this paper. 

\begin{problem}\label{pro:ioc}
  Given the $M$ observations of the optimal ``state-action pair" trajectories $\{(x_t^i,a_t^i)\}_{t=1}^N$ that are generated by \eqref{eq:forward_problem_obj_fun}, construct a consistent estimator of $\pitrue:X\rightarrow A$ that takes the structure of $\elltrue$ into account.
\end{problem}

Note that different cost functions may lead to identical optimal policies.
For instance, $\pi(x)=0$ is optimal to both $\ell_1(x,a)=a^2$ and $\ell_2(x,a)=a^4$.
Hence, $\thetatrue_\ell$ is in general not identifiable unless the feature functions vector $\varphi$ is carefully chosen.
Nevertheless, motivated by the practical need of ``learning from demonstration", in this paper we choose to not focus on the identifiability of IOC. Instead, the question of identifiability of the cost function is left for future research.
Despite this, we can still do consistency analysis in the sense that the collected state-action pairs $(x_t^i, a_t^i)$ are indeed optimal to the reconstructed cost function as the data amount tends to infinity. 
Moreover, we can also get an estimate $\hat{\pi}$ of the underlying optimal policy $\pitrue$ that corresponds to the estimated cost $\hat{\theta}_\ell^T\varphi$ (cf.~\citep{zhang2024inverse}).
Such policy estimation intrinsically takes the prior knowledge of the structures in $\elltrue$ into account, in contrast to ``behaviour cloning" \citep{sammut2017behavioral, codevilla2019exploring, torabi2018behavioralcloningobservation}. More precisely, behaviour cloning parameterizes a family of deterministic stationary policies $\pi(\cdot;\theta)$ (typically a neural network) and solves the following optimization problem
\begin{align}
\min_{\theta}\frac{1}{M}\sum_{i=1}^M\sum_{t=1}^N\|\mfa_t^i-\pi(\mfx_t^i;\theta)\|_2^2.
\label{eq:behaviour_cloning}
\end{align}
Indeed, it also offers a consistent estimator of $\pitrue$, yet such control policy reconstruction is vulnerable to out-of-distribution data samples.
Such samples can occur, e.g., due to compounding errors when using the control policy \citep{ross2011reduction}, due to distribution shifts in the noise \citep{pmlr-v70-pinto17a}, or when trying to generalize the behaviour to new environments \citep{pmlr-v48-finn16, fu2017learning}. 
In contrast, by taking the structures in $\elltrue$ into account, we obtain a more robust control policy estimate.

\section{The algorithm construction}\label{sec:alg_construct}
In this section, we construct the estimation algorithm for the cost function and the control policy. 
This is based on taking the nonlinear optimal control problem and lifing it into a infinite dimensional linear representation. 
More specifically, we first formulate an infinite dimensional linear programming problem whose optimal solution is a cost function that corresponds to the observed data's occupation measure.
Next, we formulate an equivalent infinite dimensional linear programming problem which eleviates the problem of infinite time-horizon observations.
We then approximate the problem of recovering the cost function so that it is numerically tractable, and finally, based on a reconstrction of the cost function, we formulate an estimator for the control policy. Consistency of the estimators are proved in Sec.~\ref{sec:consistency}.

\subsection{The infinite dimensional IOC problem}
First note that if the running cost function $\ell$ and its corresponding value function $V$ satisfy
\begin{align*}
  \ell(x,a)-V(x)+\alpha\int_X V(y)Q(\mathrm dy|x,a) = 0,
\end{align*}
then any state-action trajectory $\{(\mfx_t,\mfa_t)\}_{t=1}^\infty$ is optimal to $\ell$.
For example, when the running cost $\ell(x,a)=\xi$ and the value function $V(x) = \frac{\xi}{1-\alpha}$, for some $\xi\geq0$ and all $(x,a)\in\mK$.
To avoid this trivial and uninformative solution to the IOC problem, we further make the following assumption.
\begin{assumption}\label{ass:IOC_normalization}
The ``true" running cost and the ``true'' value function $(\elltrue,\Vtrue)$ satisfy
\[
\int_{\mK}\left[\elltrue(x,a)-\Vtrue(x)+\alpha\int_X \Vtrue(y)Q(\mathrm dy|x,a)\right]\mathrm dx \mathrm da\ge 1.
\]
\end{assumption}
The above assumption rules out the case that the running cost is constant since it requires the admissible state-action set $\mK$ to at least contain a non-zero measure set whose components are not optimal.

Next, we construct an IOC algorithm that is based on the optimality conditions \eqref{eq:opt_cond_inf_dim} and the adjoint operators \eqref{eq:adjoints}. To this end, we first assume that $\mu^{\pitrue}$ is known (we approximate it with data in Sec.~\ref{sec:finite-dim-ioc}). Now, for every $(x,a)\in\mK$, we define the violation of the complementary slackness condition \eqref{eq:complementary_slackness} as $\psi:\mK\rightarrow\mR$ where
\begin{equation}
  \psi(x,a;\theta_\ell,V)\!:=\!\theta_\ell^T\varphi(x,a)\! +\alpha (q^*V)(x,a)\!-\!V(x).
\label{eq:complementary_slackness_violation}
\end{equation}
Then, we construct the IOC algorithm by minimizing the ``violation" of \eqref{eq:complementary_slackness}. Namely,
\begin{subequations}\label{eq:IOC_origin}
\begin{align}
  \min_{\substack{\theta_\ell\in\mR^{n_\ell}, V\in\mathscr{F}(X)\\\psi\in\mathscr{F}(\mK)}} 
  &\;\langle\psi,\mu^{\pitrue} \rangle\label{eq:violation_of_complementarity_slackness}\\
  \st  \qquad &\;\psi = \!\theta_\ell^T\varphi\! +\alpha (q^*V)\!-\!\proj^*V\label{eq:psi_def}\\
  &\;\psi(x,a)\!\ge\! 0,\forall (x,a)\in\mK,\label{eq:bellman_inequality_constraint}\\
  &\int_{\mK}\psi(x,a)\mathrm{d}x\mathrm{d}a \ge 1.\label{eq:regularization}\\
  &\|\theta_\ell\|_{1}\!\le\! \beta_\ell,\: \|\psi\|_{\infty,\mK}\!\le\! \beta_\psi,\label{eq:l_psi_norm_bounds}\\
  &\|V\|_{\infty,X}\!\le\! \beta_V.\label{eq:V_psi_norm_bounds}
\end{align}
\end{subequations}
Note that the norm bounds $\beta_\ell$, $\beta_V$ and $\beta_\psi$ in \eqref{eq:l_psi_norm_bounds} and \eqref{eq:V_psi_norm_bounds} can be chosen to be arbitrarily large. The norm constraint guarantees that the solutions to the optimization problem is always bounded, i.e., that we have a bounded estimator, which is important in the analysis later. Nevertheless, even without the constraint \eqref{eq:l_psi_norm_bounds} and \eqref{eq:V_psi_norm_bounds}, the following Proposition still holds.

\begin{proposition}\label{prop:well_posed_ioc_infinite_dim}
The problem \eqref{eq:IOC_origin} attains at least one optimal solution, and the optimal value is $0$. Moreover, for any optimal solution $(\theta_\ell^\star, V^\star, \psi^\star)$, $\pitrue$ is an optimal control policy to the optimal control problem \eqref{eq:forward_problem_obj_fun} with the running cost $\ell^\star:=\theta_\ell^{\star T}\varphi$. 
\end{proposition}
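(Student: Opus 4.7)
The plan is to exhibit the true parameters $(\bar{\theta}_\ell,\Vtrue,\bar{\psi})$ with $\bar{\psi}:=\elltrue+\alpha q^{*}\Vtrue-\proj^{*}\Vtrue$ as a feasible point of \eqref{eq:IOC_origin} achieving objective value zero, then use the fact that the objective is bounded below by zero, and finally reverse-engineer the optimality conditions \eqref{eq:opt_cond_inf_dim} from any optimal triple to conclude optimality of $\pitrue$ for the cost $\ell^{\star}$.

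For feasibility and value zero, I would first argue that any $(\theta_\ell,V,\psi)$ satisfying \eqref{eq:psi_def}--\eqref{eq:bellman_inequality_constraint} gives $\langle\psi,\mu^{\pitrue}\rangle\ge 0$, since $\mu^{\pitrue}\in\mathcal{M}_{+}(\mK)$ and $\psi\ge 0$; hence zero is a universal lower bound on \eqref{eq:violation_of_complementarity_slackness}. Now I would verify that the candidate $(\bar{\theta}_\ell,\Vtrue,\bar{\psi})$ is feasible: condition \eqref{eq:psi_def} is its definition, \eqref{eq:bellman_inequality_constraint} is exactly the Bellman inequality \eqref{eq:bellman_inequality} for the true problem, and \eqref{eq:regularization} is precisely Assumption~\ref{ass:IOC_normalization}. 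The norm bounds \eqref{eq:l_psi_norm_bounds}--\eqref{eq:V_psi_norm_bounds} are by hypothesis chosen large enough (we invoke the remark following \eqref{eq:IOC_origin} that they can be taken arbitrarily large, using Assumption~\ref{ass:obj_func} to bound $\|\bar{\theta}_\ell\|_1$, boundedness of $\Vtrue$ on the compact set $X$, and the triangle inequality to bound $\|\bar{\psi}\|_{\infty}$). Finally, the complementary slackness condition \eqref{eq:complementary_slackness} for the true forward problem gives $\langle\bar{\psi},\mu^{\pitrue}\rangle=0$, so the candidate attains the lower bound; this simultaneously proves attainment and that the optimal value is zero.

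For the last claim, let $(\theta_\ell^{\star},V^{\star},\psi^{\star})$ be any optimal solution. The constraint \eqref{eq:bellman_inequality_constraint} together with \eqref{eq:psi_def} gives exactly the Bellman inequality \eqref{eq:bellman_inequality} for the running cost $\ell^{\star}=\theta_\ell^{\star T}\varphi$ with multiplier $V^{\star}$. Optimality forces $\langle\psi^{\star},\mu^{\pitrue}\rangle=0$, which is the complementary slackness \eqref{eq:complementary_slackness} between $\psi^{\star}$ and the occupation measure $\mu^{\pitrue}$. Finally, $\mu^{\pitrue}$ satisfies \eqref{eq:primal_feasible} by construction, since it is the occupation measure induced by the admissible policy $\pitrue$ under the initial distribution $\mu_{1}$. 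Thus $\mu^{\pitrue}$ and $V^{\star}$ satisfy all the sufficient optimality conditions \eqref{eq:opt_cond_inf_dim} for the linear program \eqref{eq:forward_problem_measure_form} with cost $\ell^{\star}$, so $\mu^{\pitrue}$ is optimal there; by the equivalence of \eqref{eq:forward_problem_obj_fun} and \eqref{eq:forward_problem_measure_form} recalled from \citep[Thm.~6.3.7]{hernandez2012discrete}, $\pitrue$ is an optimal policy for \eqref{eq:forward_problem_obj_fun} with running cost $\ell^{\star}$.

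The only delicate point is to confirm that \eqref{eq:opt_cond_inf_dim} are genuinely \emph{sufficient} for optimality in the infinite-dimensional LP, not merely necessary. Since \eqref{eq:forward_problem_measure_form} is a linear program and the derivation preceding \eqref{eq:opt_cond_inf_dim} is in fact a saddle-point argument (the Lagrangian is affine in $\mu^{\pi}$, so \eqref{eq:bellman_inequality} makes $\mathcal{L}$ bounded below in $\mu^{\pi}$, and \eqref{eq:complementary_slackness} with \eqref{eq:primal_feasible} attains the primal–dual value), no constraint qualification is needed for sufficiency; this is the only place where a careful citation to the strong-duality framework in \citep[Chap.~6]{hernandez2012discrete} should be inserted.
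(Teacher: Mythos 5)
Your proposal is correct and follows essentially the same route as the paper's proof: bound the objective below by zero using $\mu^{\pitrue}\in\mathcal{M}_+(\mK)$ and the nonnegativity constraint, exhibit the true triple $(\bar{\theta}_\ell,\Vtrue,\bar{\psi})$ as a feasible point with value zero, and then read off the optimality conditions \eqref{eq:opt_cond_inf_dim} for $\ell^\star$ from any optimal triple. Your closing remark on why those conditions are \emph{sufficient} (the weak-duality/saddle-point argument, which the paper leaves implicit when it asserts that satisfying \eqref{eq:opt_cond_inf_dim} yields optimality of $\pitrue$) is a worthwhile clarification but does not change the approach.
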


\begin{proof}
First, $\mu^{\pitrue}\in\mathcal{M}_+(\mK)$ and the constraint \eqref{eq:bellman_inequality_constraint} ensure that the objective function \eqref{eq:violation_of_complementarity_slackness} is bounded below by zero. 
Moreover, the optimal value zero can be attained since the ``true" $(\elltrue=\thetatrue_\ell^T\varphi, \Vtrue, \elltrue +\alpha (q^*\Vtrue) - \proj^*\Vtrue)$ satisfies the optimality condition \eqref{eq:opt_cond_inf_dim} and thus is both feasible to \eqref{eq:IOC_origin} and have objective function value zero. 

Next, let $(\ell^\star, V^\star, \psi^\star)$ be any optimal solution to \eqref{eq:IOC_origin}. Then $0 = \langle\psi^\star,\mu^{\pitrue} \rangle = \langle \ell^\star +\alpha (q^*V^\star) - \proj^*V^\star, \mu^{\pitrue}\rangle$, and since constraint \eqref{eq:bellman_inequality_constraint} is also satsified, $\pitrue$ satisfies the optimality conditions \eqref{eq:opt_cond_inf_dim}.
\end{proof}

In addition, for bounded $\elltrue$, its corresponding value function $\Vtrue$ is the limit of the well-known Value Iteration (VI), which is a contraction map on $ \mathscr{F}(X) $ with respect to supremum norm \citep[p. 52]{hernandez2012discrete}. 
Thus we have $\left\|\Vtrue\right\|_\infty< \infty$. 
Furthermore, by Assumption \ref{ass:transition_kernel} and \ref{ass:obj_func}, $\Vtrue$ is the fixed point of VI, this implies that for all $\eta_1:=(x_1,a_1),\eta_2:=(x_2,a_2)\in\mathbb{K}$, it holds that
\begin{align*}
&\left|\min_{a_1\in A}(\elltrue+\alpha q^* \Vtrue)(x_1,a_1)-\min_{a_2\in A}(\elltrue+\alpha q^* \Vtrue)(x_2,a_2)\right|\\
&\le \sup_{a\in A}\left|(\elltrue+\alpha q^* \Vtrue)(x_1,a)-(\elltrue+\alpha q^* \Vtrue)(x_2,a)\right|\\
&\le\sup_{a\in A}\left[\left|\elltrue(x_1,a)-\elltrue(x_2,a)\right|+\alpha\left|q^*\Vtrue(x_1,a)-q^*\Vtrue(x_2,a)\right|\right]\\
&\le \sup_{a\in A}\left[(L_\ell+\|\Vtrue\|_\infty L_Q)\|[x_1^T,a^T]^T-[x_2^T,a^T]^T\|_2\right]\\
&=\underbrace{(L_\ell+\|\Vtrue\|_\infty L_Q)}_{\beta_C}\|x_1-x_2\|_2
\end{align*}

and hence $\Vtrue\in \text{Lip}(\beta_C)$. 
Thus we can restrict ourselves to the set $V\in \text{Lip}(\beta_C)$ and $\psi\in \contfun(\mK)$, and the ``true" $\Vtrue$ and the corresponding $\bar{\psi}$ are still feasible when solving \eqref{eq:IOC_origin}. That is, we instead consider the problem
\begin{subequations}\label{eq:IOC_origin_cont_func_set}
  \begin{align}
    \min_{\substack{\theta_\ell\in\mR^{n_\ell}, V\in \contfun(X)\\\psi\in \contfun(\mK)}} &\;\;\langle\psi,\mu^{\pitrue} \rangle\\
    \label{eq:IOC_origin_cont_func_set_obj}
    \st \qquad &\;\;\eqref{eq:psi_def}-\eqref{eq:V_psi_norm_bounds},\\
    &V\in \text{Lip}(\beta_C).
    \label{eq:IOC_origin_V_lip_constraint}
  \end{align}
\end{subequations}
Analogously to the constraints \eqref{eq:l_psi_norm_bounds} and \eqref{eq:V_psi_norm_bounds}, we do not have to know the exact number of $\beta_C$ in practice since such bound on Lipschitz constant can be taken arbitrarily large.

Nevertheless, we do not have access to the measure $\mu^{\pitrue}$ in practice. One reason is that $\mu^{\pitrue}$ is defined as a sum of infinitely many occupation measures $\mu_t^{\pitrue}$ over time, yet we do not have infinite data length in practice. However, we can show that finite-time observations of the agent are sufficient to achieve the same effect as $\mu^{\pitrue}$.

\begin{proposition}[Finite-time IOC algorithm]\label{prop:finite_time_horizon_equivalent}
  Under Assumption \ref{ass:stationary_markov_policy}, for any $\nu\in\mathcal{M}_+(X)$, if $\support(\nu) =  \support(\proj\mu^{\pitrue})$, then solving \eqref{eq:IOC_origin_cont_func_set} is equivalent to solving
  \begin{subequations}\label{eq:IOC_approx1}
  \begin{align}
    \min_{\substack{\theta_\ell\in\mR^{n_\ell}, V\in \contfun(X)\\\psi\in \contfun(\mK)}} &\;\;\langle\psi, \nu \otimes\pitrue  \rangle\label{eq:IOC_approx1_obj_func}\\
    \st \qquad &\;\;\eqref{eq:psi_def}-\eqref{eq:V_psi_norm_bounds},\eqref{eq:IOC_origin_V_lip_constraint}, \label{eq:IOC_approx1_constraints}
  \end{align}
\end{subequations}
  where 
  \begin{align*}
    \langle f, \nu\otimes\pitrue\rangle := \int_X \left[ \int_A f(x,a)\pitrue(\mathrm{d}a|x) \right]\nu(\mathrm{d}x),\quad \forall f.
  \end{align*}
More specially, it holds that $\hat{\mu}^{\pitrue,N}:=\sum_{t=1}^N\mu_t^{\pitrue} = \left(\sum_{t=1}^N\proj\mu_t^{\pitrue}\right)\otimes\pitrue$, and under Assumption \ref{ass:init_value}, it holds that $\support\left(\sum_{t=1}^N\proj\mu_t^{\pitrue}\right)=\support( \proj\mu^{\pitrue}).$
\end{proposition}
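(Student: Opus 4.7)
The plan has two parts, corresponding to the two claims of the proposition. I would first dispatch the ``more specifically'' identities and then leverage them to establish the equivalence.

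For the disintegration, Assumption~\ref{ass:stationary_markov_policy} forces each $\pitrue_t$ to be the Dirac kernel at $\pitrue(x_t)$, so for any Borel rectangle $\Gamma_x\times\Gamma_a\subseteq\mK$,
\[
\mu_t^{\pitrue}(\Gamma_x\times\Gamma_a)=\mE\bigl[\indicator_{\Gamma_x}(\mfx_t)\indicator_{\Gamma_a}(\pitrue(\mfx_t))\bigr]=\int_{\Gamma_x}\indicator_{\Gamma_a}(\pitrue(x))\,d(\proj\mu_t^{\pitrue})(x),
\]
which extends to arbitrary Borel subsets of $\mK$ by a monotone class argument, yielding the disintegration $\mu_t^{\pitrue}=\proj\mu_t^{\pitrue}\otimes\pitrue$. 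Summing over $t=1,\ldots,N$ immediately gives $\hat\mu^{\pitrue,N}=\bigl(\sum_{t=1}^N\proj\mu_t^{\pitrue}\bigr)\otimes\pitrue$. For the support identity, $\proj\mu_1^{\pitrue}=\mu_1$ has support $X$ by Assumption~\ref{ass:init_value}, and non-negativity of each $\proj\mu_t^{\pitrue}$ gives $X=\support(\mu_1)\subseteq\support\!\bigl(\sum_{t=1}^N\proj\mu_t^{\pitrue}\bigr)\subseteq X$; the same argument applied to the discounted infinite sum yields $\support(\proj\mu^{\pitrue})=X$.

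For the equivalence of \eqref{eq:IOC_origin_cont_func_set} and \eqref{eq:IOC_approx1}, I would note first that their constraints are identical and hence their feasible sets coincide; only the objective differs. Applying the same disintegration now to $\mu^{\pitrue}=\proj\mu^{\pitrue}\otimes\pitrue$, both objectives collapse to integrals of the common non-negative integrand $h(x):=\psi(x,\pitrue(x))$ against $\proj\mu^{\pitrue}$ and $\nu$ respectively. The true solution $(\thetatrue_\ell,\Vtrue,\bar\psi)$ is feasible for both problems and renders $h\equiv 0$, so by Proposition~\ref{prop:well_posed_ioc_infinite_dim} and the same feasibility argument for \eqref{eq:IOC_approx1}, both attain optimal value $0$. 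It then suffices to show that the set of feasible triples making the objective vanish is the same for both. I would phrase this at the $\mK$-level rather than the $X$-level: $\mu^{\pitrue}$ and $\nu\otimes\pitrue$ are pushforwards of $\proj\mu^{\pitrue}$ and $\nu$ under the common map $x\mapsto(x,\pitrue(x))$, while $\psi$ is continuous on $\mK$ (inherited from Assumptions~\ref{ass:transition_kernel} and \ref{ass:obj_func} and the Lipschitz constraint~\eqref{eq:IOC_origin_V_lip_constraint}) and non-negative by \eqref{eq:bellman_inequality_constraint}, so the vanishing of either integral is equivalent to $\psi$ vanishing on the pushforward's support; matching these two supports using the hypothesis $\support(\nu)=\support(\proj\mu^{\pitrue})=X$ closes the argument.

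The delicate step that I expect to be the main obstacle is matching the two pushforward supports on $\mK$. Because $\pitrue$ is only measurable, the identity $\support(T_*\mu)=\overline{T(\support(\mu))}$ with $T(x):=(x,\pitrue(x))$ need not hold, so equality of base supports on $X$ does not automatically transfer to equality of supports on $\mK$. The way to circumvent this is to work jointly with the continuity of $\psi$ in both arguments and the concentration of both pushforwards on the graph of $\pitrue$: if $\psi(x_0,\pitrue(x_0))>0$, continuity of $\psi$ yields an open tube in $\mK$ on which $\psi$ is bounded away from zero, and one then needs to argue that such a tube must be detected by both pushforward integrals whenever the base supports both equal $X$. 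This is where the technical care in the proof should concentrate.
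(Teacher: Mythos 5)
Your handling of the ``more specifically'' identities is correct and actually more complete than the paper's: you derive the disintegration $\mu_t^{\pitrue}=\proj\mu_t^{\pitrue}\otimes\pitrue$ from the Dirac structure imposed by Assumption~\ref{ass:stationary_markov_policy}, sum over $t$, and get the support identity from $\proj\mu_1^{\pitrue}=\mu_1$ together with Assumption~\ref{ass:init_value}; the paper's proof does not address these claims at all. For the main equivalence you then follow essentially the same route as the paper: the two problems share the same feasible set, the true triple $(\thetatrue_\ell,\Vtrue,\bar\psi)$ certifies that both optimal values are zero (via Proposition~\ref{prop:well_posed_ioc_infinite_dim}), and everything reduces to showing that a feasible triple annihilates $\langle\psi,\mu^{\pitrue}\rangle$ if and only if it annihilates $\langle\psi,\nu\otimes\pitrue\rangle$.

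The problem is that your proposal stops exactly at the step that carries the whole weight of the proposition, and the sketch you offer for it does not close it. Both objectives reduce to $\int_X h\,\mathrm{d}(\proj\mu^{\pitrue})$ and $\int_X h\,\mathrm{d}\nu$ with $h(x)=\psi(x,\pitrue(x))\ge 0$; since $\pitrue$ is only measurable, $h$ is only measurable, and equality of supports of two measures does not imply equality of their null sets (Lebesgue measure on $[0,1]$ and Lebesgue measure plus an atom at $0$ have the same support but disagree on whether $\int\indicator_{\{0\}}=0$). Your proposed fix via an open tube $U$ around $(x_0,\pitrue(x_0))$ where $\psi>c>0$ does not rescue this: the relevant set is the preimage $\{x:(x,\pitrue(x))\in U\}$ under the merely measurable map $x\mapsto(x,\pitrue(x))$, which is just a measurable set containing $x_0$ and can be $\nu$-null even though $x_0\in\support(\nu)$. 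So, as a proof, the proposal has a genuine gap at its crux. It is worth noting that the paper's own proof resolves this step purely by assertion --- it claims that $\support(\nu)=\support(\proj\mu^{\pitrue})$ implies $\int_X f\,\mathrm{d}(\proj\mu^{\pitrue})=0\Leftrightarrow\int_X f\,\mathrm{d}\nu=0$ for \emph{any} $f:X\rightarrow\mR_+$, which is precisely the implication you flag and which does not follow from support equality alone for merely measurable $f$ (it would for continuous $f$, or under mutual absolute continuity of $\nu$ and $\proj\mu^{\pitrue}$; in the intended application $\nu=\sum_{t=1}^N\proj\mu_t^{\pitrue}\le\alpha^{-(N-1)}\proj\mu^{\pitrue}$ gives one direction for free, but the converse still requires the tail $\sum_{t>N}\alpha^{t-1}\proj\mu_t^{\pitrue}$ to be absolutely continuous with respect to the partial sum). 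Your diagnosis of the obstacle is sharper than the paper's treatment, but a complete argument needs either an added hypothesis of this kind or a genuinely new idea at this point.
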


\begin{pf}
  Since $\support(\nu) =\support(\proj\mu^{\pitrue})$, for any $f:X\rightarrow \mR_+$, it holds that
  \begin{align*}
    \int_Xf(x)(\proj\mu^{\pitrue})(\mathrm{d}x) = 0
    \Leftrightarrow\int_Xf(x)\nu(\mathrm{d}x) = 0.
  \end{align*}
  By Propostion \ref{prop:well_posed_ioc_infinite_dim}, the optimal value of \eqref{eq:IOC_origin} is zero, and hence the optimal value of \eqref{eq:IOC_origin_cont_func_set} is also zero. By following the same argument as that in the proof of Proposition \ref{prop:well_posed_ioc_infinite_dim}, it follows from $\nu\otimes\pitrue\in\mathcal{M}_+(\mK)$ and the constraint \eqref{eq:bellman_inequality_constraint} that the optimal value \eqref{eq:IOC_approx1} is bounded from below by zero and that the ``true" $(\elltrue, \Vtrue, \elltrue +\alpha (q^*\Vtrue) - \proj^*\Vtrue)$ attains the optimal value zero. Now suppose $(\ell^\star,V^\star, \psi^\star)$ is optimal to \eqref{eq:IOC_origin_cont_func_set}.  Thus we have
  \begin{align*}
    &(\ell^\star,V^\star, \psi^\star) \text{ minimizes \eqref{eq:IOC_origin},}\\
    \Rightarrow&\langle\psi^\star, \mu^{\pitrue}\rangle = \int_X \left[ \int_A \psi^\star(x,a)\pitrue(\mathrm{d}a|x) \right](\proj\mu^{\pitrue})(\mathrm{d}x) = 0, \\
    \Leftrightarrow&\langle\psi^\star,\nu\otimes\pitrue\rangle = \int_X \left[ \int_A \psi^\star(x,a)\pitrue(\mathrm{d}a|x) \right]\nu(\mathrm{d}x) = 0.
  \end{align*}
  Since \eqref{eq:IOC_origin_cont_func_set} and \eqref{eq:IOC_approx1} have the same constraints, $(\ell^\star,V^\star, \psi^\star)$ is also optimal to \eqref{eq:IOC_approx1}. Conversely, the same argument applies.
\end{pf}

\subsection{The finite dimension approximation of the IOC algorithm}\label{sec:finite-dim-ioc}
The above proposition implies that, instead of solving \eqref{eq:IOC_origin_cont_func_set}, we can spare ourselves from infinite data length and solve \eqref{eq:IOC_approx1} with $\nu\otimes\pitrue=\hat{\mu}^{\pitrue,N}$. Yet we still do not know the distribution $\hat{\mu}^{\pitrue,N}$ in practice, and hence we need to further approximate \eqref{eq:IOC_approx1} with $M$ I.I.D. samples $\{(x_t^i,a_t^i)\}_{t=1}^N$, $i=1:M$. In particular, we approximate the objective function \eqref{eq:IOC_approx1_obj_func} with empirical average, i.e.,
\begin{align}
  \langle\psi,\hat{\mu}^{\pitrue,N} \rangle\approx\frac{1}{M}\sum_{i=1}^M\sum_{t=1}^N\psi(\mfx_t^i,\mfa_t^i).
  \label{eq:emphirical_average}
\end{align}
Nevertheless, the approximated problem still optimizes over $V\in\contfun(X)$ and $\psi\in\contfun(\mathbb{K})$, which is infinite dimensional and thus difficult to solve. Moreover, the inequality constraint \eqref{eq:bellman_inequality_constraint} is challenging to handle since it needs to hold for all $(x,a)\in\mK$.
To address this issue, we approximate $\psi$ with a polynomial and use Putinar's Positivstellensatz \citep{putinar1993positive}\citep[Thm.~3.20]{laurent2009sums} to make the polynomial nonnegative.
More precisely, we approximate $\psi$ by a $2d_\psi$-th order polynomial on $\mK$ (without loss of generality, we consider only even-order polynomials to simplify the analysis to come), and represent the polynomial with Lagrange interpolation basis \citep{sauer1995multivariate}. Namely, for all $\eta := (x,a)\in\mK$, we have
\begin{align}\label{eq:polynomial_approx}
    \psi(\eta)\approx \hat{\psi}(\eta):= \underbrace{
    \begin{bmatrix}
    \hat\psi(\eta_{[1]}) &\cdots &\hat\psi(\eta_{[D_\psi]})
    \end{bmatrix}
    }_{\theta_\psi^T}\phi(\eta;\eta_{[\cdot]}),
\end{align}
where $D_\psi = C(n_x+n_u+2d_\psi, 2d_\psi)$ is the dimension of $2d_{\psi}$-th order polynomial space, $\eta_{[\cdot]}=\{\eta_{[1]},\ldots,\eta_{[D_\psi]}\}$ are the interpolation nodes, and the Lagrange polynomial basis $\phi(\eta;\eta_{[\cdot]})$ takes the form
\[
\phi(\eta;\eta_{[\cdot]}) = 
\begin{bmatrix}
\phi_1(\eta;\eta_{[\cdot]}) &\cdots &\phi_{D_\psi}(\eta;\eta_{[\cdot]})
\end{bmatrix}^T.
\]

\begin{remark}\label{remark:using-interpolation}
Note that any polynomial basis would work. Nevertheless, here we choose to work with Lagrange interpolation basis $\phi(\eta;\eta_{[\cdot]})$. The reason is that this leads to computation benefits; more details about this is found in Sec. \ref{sec:practical_implementation}.
\end{remark}

Next, to enforce the nonnegativity of $\hat{\psi}$, note that by Assumption \ref{ass:compact_support}, $\mK$ can be characterized by the semi-algebraic set $\{(x,a)|g_i(x,a)\ge 0,i=1:n_x+n_a\}$, where $g_i(x,a)= (x_i-x_{i,\min})(x_{i,\max}-x_i)$ and analogously for the bounds on $a$.
The quadratic module \citep[Eq.~3.13]{laurent2009sums} generated by $g_1,\ldots,g_{n_x+n_a}$ takes the form
\begin{align*}
\mathscr{Q}(g_{1:n_x+n_a})\!:=\!\{u_0\!+\!\!\!\sum_{i=1}^{n_x+n_a}\!\!u_i g_i \mid u_0,u_i\!\in\!\Sigma^2\}.
\end{align*}
Notably, $\hat{\psi}\in \mathscr{Q}(g_{1:n_x+n_a})$ means that $\hat{\psi}(\eta)\ge 0,\forall \eta\in\mK$.
Nevertheless, constraining $\hat{\psi}$ to be in $\mathscr{Q}(g_{1:n_x+n_a})$ might, a prior, be too restrictive. To see that it is in fact not the case, we first observe that $\mK$ is also Archimedean \citep[Def.~3.18]{laurent2009sums}. To see this, we consider the function 
\begin{align*}
f(\eta) = \sum_{i=1}^{n_x+n_a}g_i(x,a)=-\|\eta-\zeta\|_2^2+\mbox{constant},
\end{align*}
where $\zeta = [\zeta_1^x,\ldots,\zeta_{n_x}^x,\zeta_{1}^a,\ldots,\zeta_{n_a}^a]^T$ and $\zeta_i^x = \frac{x_{i,\min}+x_{i,\max}}{2}$,  $\zeta_i^a = \frac{a_{i,\min}+a_{i,\max}}{2}$. It holds that $f\in\mathscr{Q}(h_{1:n_x+n_a},g_{1:m})$. Moreover, the set $\{\eta|f(\eta)\ge 0\}$ is compact. Hence by \citep[Thm.~3.17, Def.~3.18]{laurent2009sums}, $\mK$ is Archimedean. 
By Putinar's Positivstellensatz \citep{putinar1993positive}\citep[Thm.~3.20]{laurent2009sums}, all polynomials that are positive on $\mK$ must belong to $\mathscr{Q}(g_{1:n_x+n_a})$.
In particular, this means that 
\begin{align}
\hat{\psi}(\eta)> 0, \; \forall \eta\in \mK \quad
\implies \quad \hat{\psi}\in\mathscr{Q}(g_{1:n_x+n_a}).\label{eq:epsilon_ineq1}
\end{align}
Therefore we can approximate \eqref{eq:bellman_inequality_constraint} to abitrary accuracy with \eqref{eq:epsilon_ineq1}. 
On the other hand, since \eqref{eq:epsilon_ineq1} says that $\hat{\psi}$ should belong to the quadratic module, it implies that the coefficients $\theta_\psi$ must belong to the cone $\left\{\theta\in\mR^{D_\psi}|\theta^T\phi\in \mathscr{Q}(g_{1:n_x+n_a})\right\}$, which, with a slight abuse of notation, we will denote as $\theta_\psi \in \mathscr{Q}(g_{1:n_x+n_a})$.

With the polynomial approximation \eqref{eq:polynomial_approx}, we can re-write the empirical average \eqref{eq:emphirical_average} as
\begin{align*}
    &\frac{1}{M}\sum_{i=1}^M\sum_{t=1}^N\hat\psi(\mfx_t^i,\mfa_t^i) = h^T\theta_\psi,
\end{align*}
where 
\begin{align*}
\bm h = \begin{bmatrix}
        \frac{1}{M}\sum_{i=1}^M\sum_{t=1}^N\phi(\mfx_t^i,\mfa_t^i;\eta_{[\cdot]})
    \end{bmatrix}.
\end{align*}
Similarly, we can further approximate the constraint \eqref{eq:regularization} as
\begin{align*}
    \int_\mK \hat\psi(\eta) \mathrm{d}\eta = \theta_\psi^T\underbrace{\begin{bmatrix}
        \int_\mK\phi(\eta;\eta_{[\cdot]})\mathrm{d}\eta
    \end{bmatrix}}_{d} \ge 1.
\end{align*}
In addition, the structure of the constraint \eqref{eq:psi_def} lends itself to an approximation by linear equations.
To see this, noticing that $V$ only depends on $x$, we hence approximate $V$ by $2d_V$-th order polynomial on $X$,
\begin{align*}
    V(x) \approx \hat{V}(x):=\theta_V^T r(x), \; \theta_V \in \mR^{D_V},
\end{align*}
where $D_V = C(n_x+2d_V, 2d_V)$, and $r(x)$ is a basis of $2d_V$-th order polynomials on $\mR^{n_x}$.
Consequently, the coordinate of $\proj^*V$ and $q^*V$ with respect to basis $\phi(\eta;\eta_{[\cdot]})$ can be calculated by their value at the interpolation points, i.e.,
\begin{align*}
    (\proj^*V)(\eta) &= \theta_V^T\underbrace{\begin{bmatrix}
        (\proj^*r)(\eta_{[1]}) 
        &\cdots 
        &(\proj^*r)(\eta_{[D_\psi]})
    \end{bmatrix}}_{=: G_1^T}\phi(\eta;\eta_{[\cdot]}),\\
    (q^*V)(\eta) &= \theta_V^T\underbrace{\begin{bmatrix}
        (q^*r)(\eta_{[1]}) 
        &\cdots 
        &(q^*r)(\eta_{[D_\psi]})
    \end{bmatrix}}_{=: G_2^T}\phi(\eta;\eta_{[\cdot]}).
\end{align*}
In a similar manner, we further do a coordinate change on $\ell$ to the same basis $\phi(\eta;\eta_{[\cdot]})$.
More specifically, by Assumption \ref{ass:obj_func}, 
\begin{align*}
\ell = \theta_\ell^T\varphi =\theta_\ell^T
\begin{bmatrix}
\varphi_1\\\vdots\\\varphi_{n_\ell}
\end{bmatrix}
\approx \theta_\ell^T
\underbrace{\begin{bmatrix}
\gamma_{\varphi_1}^T\\\vdots\\\gamma_{\varphi_{n_\ell}}^T
\end{bmatrix}}_{=: H^T}\phi,
\end{align*}
where $\gamma_{\varphi_i}^T\phi(\eta;\eta_{[\cdot]})$ is an approximation of the feature $\varphi_i$ in the Lagrange polynomial basis $\phi(\eta;\eta_{[\cdot]})$.
Therefore, the constraint \eqref{eq:psi_def} now can be approximated with the following linear constraints on the coefficients of the polynomial approximations
\begin{align}
    &\theta_\psi\!=\!\! \underbrace{\begin{bmatrix}H &\alpha G_2\!-\!G_1\end{bmatrix}}_{\Xi_\psi \in \mR^{D_\psi\times (n_\ell + D_V)}}\! \begin{bmatrix}
        \theta_\ell\\\theta_V
    \end{bmatrix}\!\!\Rightarrow\!\!
    \underbrace{\begin{bmatrix}
    I &-H &G_1\!\!-\!\!\alpha G_2
    \end{bmatrix}}_{=: G}\!
    \begin{bmatrix}
    \theta_\psi \\\theta_\ell \\ \theta_V
    \end{bmatrix}\!=\!0.\label{eq:psi_ell_V_equality_constraint}
\end{align}

As a brief summary, we can approximate \eqref{eq:IOC_origin} with the convex optimization problem
\begin{subequations}\label{eq:IOC_approx2}
\begin{align}
\min_{\theta_\psi\theta_\ell,\theta_V}&\;\;\bm h^T\theta_\psi\\
&\;\; \eqref{eq:psi_ell_V_equality_constraint}\\
&\;\; \theta_\psi\in\mathscr{Q}(g_{1:n_x+n_a}),\label{eq:theta_psi_constraint}\\
&\;\; d^T\theta_\psi\geq 1,\\
&\;\; \|\theta_\psi\|_{\infty}^2\le \beta_\psi^\prime,\label{eq:theta_psi_bounded}\\
&\;\;\|\theta_\ell\|_{\infty}^2\le \beta_\ell^\prime,\;\|\theta_V\|_{\infty}^2\le \beta_V^\prime.\label{eq:theta_V_ell_bounded}
\end{align}
\end{subequations}
Moreover, since all norms on finite dimensional vector spaces are equivalent \citep[Theorem~2.4-5]{kreyszig1991introductory}, the constraints \eqref{eq:l_psi_norm_bounds},\eqref{eq:V_psi_norm_bounds} and \eqref{eq:IOC_origin_V_lip_constraint} approximated in finite dimension is relaxed and simplized to \eqref{eq:theta_psi_bounded} and \eqref{eq:theta_V_ell_bounded}, where $\beta_\psi^\prime,\beta_\ell^\prime,\beta_V^\prime$ can be chosen arbitrarily large in practice.

\subsection{Expert policy reconstruction}\label{sec:policy-recovery}
Solving \eqref{eq:IOC_approx2} gives the esimates $(\hat{\ell}_{M,\mathbf{d}},\hat{V}_{M,\mathbf{d}})$ for the ``true" running cost and the value function $(\elltrue,\Vtrue)$, where $\mathbf{d}=(d_\psi,d_V)$. 
With the identified $\hat{\ell}_{M,\mathbf{d}}$, any off-the-shelf algorithm (e.g., model predictive control) that solves the forward optimal control problem can be used for control policy reconstruction.
As mentioned previously in Sec.~\ref{sec:problem_formulation}, such control policy reconstruction method should have better generalizability and robustness compared to behaviour cloning.
Nevertheless, in the worst case scenarios,
the reconstructed control policy may be different from the expert's policy.
This is because the ``true" cost function may not be identifiable and the identified cost function does not align with the ``true" one; or because the ``true" cost function may admit multiple optimal control policies.
Despite this, we can still get a reasonable control policy by taking the prior knowledge of the cost structure and the IOC result into account in the worst case scenarios.
To this end, note that by Assumption \ref{ass:stationary_markov_policy} and \eqref{eq:opt_cond_inf_dim}, it holds for the ``true" $(\elltrue,\Vtrue)$ and the expert control policy $\pitrue$ that
\begin{align*}
0&=\langle \bar{\psi}:=\elltrue+(\alpha q^*-\proj^*)\Vtrue,\mu^{\pitrue}\rangle+ \langle \| a - \pitrue(x) \|_2^2],\mu^{\pitrue}\rangle\\
&=\langle \bar{\psi},(\proj \mu^{\pitrue})\otimes\pitrue\rangle+\langle \| a - \pitrue(x) \|_2^2],\mu^{\pitrue}\rangle\\
&=\int_X \bar{\psi}(x, \pitrue(x)) (\proj\mu^{\pitrue})(\mathrm{d}x) + \langle\| a - \pitrue(x) \|_2^2,\mu^{\pitrue}\rangle.
\end{align*}
On the other hand, due to \eqref{eq:bellman_inequality_constraint}, for expert policy $\pitrue(x)$ and all deterministic stationary policies $\pi$ it holds that
\[
\int_X \bar{\psi}(x, \pi(x)) (\proj\mu^{\pitrue})(\mathrm{d}x) +\langle\| \pitrue(x) - \pi(x) \|_2^2,\mu^{\pitrue}\rangle\ge 0.
\]
Hence we parameterize $\pi$ by $\theta_\pi\in\Theta$, where $\Theta\subset\mR$ is compact and $\pi(x;\cdot):\Theta\rightarrow A$ is continuous for any $x$, for instance, a neural network $\pi(x;\theta_\pi)$ whose weights are $\theta_\pi$. And then we estimate $\pitrue$ by solving
\[
\min_{\theta_\pi\in\Theta} \; \langle\bar{\psi}(x, \pi(x;\theta_\pi)) +\| \pitrue(x) - \pi(x;\theta_\pi) \|_2^2,\proj\mu^{\pitrue}\rangle.
\]
Notably, by following an argument similar to the proof of Proposition \ref{prop:finite_time_horizon_equivalent}, it is equivalent to solving
\begin{align}
\min_{\theta_\pi} \; & \langle\bar{\psi}(x, \pi(x;\theta_\pi))+\| \pitrue(x) - \pi(x;\theta_\pi) \|_2^2,\proj\hat{\mu}_t^{\pitrue,N}\rangle
\label{eq:policy-approx-reconstruction-infinite}
\end{align}
Though we do not have access to neither the actual distribution $\hat{\mu}_t^{\pitrue,N}$, nor $\bar{\psi}$, we can still approximate the above problem. First, the integral can be approximated with the empirical mean of the I.I.D. observations of the optimal ``state-action pair" trajectories $\{(x_t^i,a_t^i)\}_{t=1}^N$, $i=1:M$. Next, we approximate $\bar{\psi}$ with the approximation $\hat{\psi}_{M,\mathbf{d}}$ that solves \eqref{eq:IOC_approx2}. Together, this gives the finite-dimensional optimization problem
\begin{align}
\min_{\theta_\pi\in \Theta} \! \frac{1}{M}\!\sum_{t=1}^N\!\sum_{i=1}^M\! \hat{\psi}_{M,\mathbf{d}}(\mfx_t^i, \pi(\mfx_t^i;\theta_\pi)) \!+\! \| \mfa_t^i \!-\! \pi(\mfx_t^i;\theta_\pi) \|_2^2.
\label{eq:policy_approx_reconstruction}
\end{align}

As a summary, we have constructed the infinite dimensional IOC algorithm \eqref{eq:IOC_approx1} and

a finite dimensional approximation \eqref{eq:IOC_approx2} that

can be solved numerically. 
More specifically, solving \eqref{eq:IOC_approx2} yields an approximation of the cost function as well as the value function that can lead to the ``true" policy.
As mentioned previously, with the identified cost function, one may apply any standard optimal control method to obtain a policy with good generalization capability.
In addition, solving \eqref{eq:policy_approx_reconstruction} provides another way of estimating the optimal control policy. The complete estimation procedure is described in Algorithm~\ref{alg:estimator}. 

\begin{algorithm}[!htpb]
    \caption{Estimation procedure for cost and policy}
    \label{alg:estimator}
    \begin{algorithmic}[1]
        \Require Data $\{(x_t^i,a_t^i)\}_{t=1}^N$, $i=1:M$, system dynamics $Q(\cdot|x,a)$, state and action space $\mK = X\times A$, polynomial degrees $(d_\psi,d_V)$, parametrization $\pi(\cdot;\theta_\pi)$.
        \Ensure $(\hat{\ell}_{M,\mathbf{d}},\hat{V}_{M,\mathbf{d}})$ and $\pi(\cdot;\hat{\theta}_\pi)$.
        \State Design interpolation the points $\eta_{[\cdot]}$ and the polynomial bases $r(x)$.
        \State Calculate the matrix $G$, defined in \eqref{eq:psi_ell_V_equality_constraint}, from system dynamics and sample moments vector $h$ from data.
        \State Obtain the estimate $(\hat{\ell}_{M,\mathbf{d}},\hat{V}_{M,\mathbf{d}})$ by solving \eqref{eq:IOC_approx2}.
        \State Obtain the estimate $\pi(\cdot;\hat{\theta}_\pi)$: either by solving \eqref{eq:policy_approx_reconstruction} to estimate the expert policy, or by applying any standard optimal control algorithm.
    \end{algorithmic}
\end{algorithm}

\section{Consistency analysis}\label{sec:consistency}

We now analyze the consistency of the proposed algorithm. 
By Proposition \ref{prop:finite_time_horizon_equivalent}, in theory, the underlying cost function can be estimated by solving \eqref{eq:IOC_approx1}. Yet such infinite dimensional optimization problem is not numerically tractable and hence the decision variables $\psi$ and $V$ in \eqref{eq:IOC_approx1}

are approximated by finite-order polynomials so as to yield problem \eqref{eq:IOC_approx2}. In addition, the integrals are approximated via the empirical averages defined in \eqref{eq:emphirical_average} and \eqref{eq:policy_approx_reconstruction}. To justify these approximations, we present the following theorem, which states that as the data amount $M$ and the polynomial orders $\mathbf{d}=(d_\psi, d_V)$ tend to infinity, the estimate $(\hat{\ell}_{M,\mathbf{d}}, \hat{V}_{M,\mathbf{d}})$ converges to a running cost and a value function that corresponds to the ``true'' expert policy $\pitrue$. Such convergence constitutes the key consistency property.

\begin{remark}
Note that in \eqref{eq:IOC_origin_cont_func_set}, we optimize $(\theta_\ell, V, \psi)$.
To simplify the analysis, in view of \eqref{eq:psi_def}, we substitute $\psi$ with the expression of $\theta_\ell$ and $V$ in the remainder of this section. 
\end{remark}

\begin{theorem}\label{thm: MAIN-CONSISTENCY-THEOREM}
    Under Assumptions~\ref{ass:transition_kernel}-\ref{ass:stationary_markov_policy} and \ref{ass:IOC_normalization}, as the polynomial degrees $d_\psi, d_V$ and the data sample size $M$ approach infinity, the solution of \eqref{eq:IOC_approx2} achieves the optimal value of \eqref{eq:IOC_approx1} almost surely. Formally, the following equation holds a.s. 
    \begin{equation*}
        \begin{aligned}
            \lim_{M\rightarrow\infty}\lim_{d_V\rightarrow\infty}\lim_{d_\psi\rightarrow\infty} \langle\theta_{\ell, M}^{\mathbf{d},T}\varphi\!+\!(\alpha q^*\!-\!\proj^*)(\theta_{V, M}^{\mathbf{d},T}r),\hat{\mu}^{\pitrue, N} \rangle\!=\!0,
        \end{aligned}
    \end{equation*}
    where $(\theta_{\ell, M}^{\mathbf{d}}, \theta_{V, M}^{\mathbf{d}})$ is an optimal solution of \eqref{eq:IOC_approx2}. 
\end{theorem}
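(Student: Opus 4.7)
The strategy is to sandwich the claimed limit between a non-negative quantity obtained from the SOS constraint and an upper bound built by constructing near-optimal feasible polynomial certificates. The lower bound is essentially automatic: the constraint $\theta_\psi \in \mathscr{Q}(g_{1:n_x+n_a})$ forces $\hat\psi = \theta_\psi^T \phi \ge 0$ on $\mK$, and the Lagrange-interpolation construction in \eqref{eq:psi_ell_V_equality_constraint} is an exact polynomial reproduction when $\varphi$ and $q^* r$ are polynomial, so the residual between $\hat\psi$ and $\theta_\ell^T\varphi + (\alpha q^* - \proj^*)(\theta_V^T r)$ goes to zero uniformly as $d_\psi, d_V \to \infty$ by standard polynomial-interpolation estimates (using $\varphi \in \lip(L_\varphi)$ and Assumption~\ref{ass:transition_kernel}). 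Combined with the boundedness \eqref{eq:theta_psi_bounded}--\eqref{eq:theta_V_ell_bounded}, the continuous Bellman residual evaluated at $\hat\mu^{\pitrue,N}$ is therefore arbitrarily close to $\langle \hat\psi, \hat\mu^{\pitrue,N}\rangle \ge 0$.

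For the upper bound, I would construct a feasible sequence with vanishing empirical objective. First, for the innermost limit $d_\psi \to \infty$ with $d_V$ and the data fixed: since $X\times A$ is a compact hyper-cube (Assumption~\ref{ass:compact_support}), the set $\mK$ is Archimedean as already noted just before \eqref{eq:epsilon_ineq1}. Putinar's Positivstellensatz then guarantees that any polynomial strictly positive on $\mK$ admits an SOS certificate in $\mathscr{Q}(g_{1:n_x+n_a})$ of some finite degree. Hence, starting from any polynomial $\hat{\psi}' \geq 0$ that is realizable from some $(\theta_\ell,\theta_V)$, one takes the perturbed candidate $\hat\psi'+\epsilon$, obtains a truncation-degree bound $d_\psi(\epsilon)$, and rescales so that $d^T\theta_\psi \ge 1$; as $\epsilon\downarrow 0$ the objective $\bm h^T\theta_\psi$ approaches $\bm h^T\theta_{\psi'}$.

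Next, for $d_V \to \infty$: the Lipschitz regularity of $\Vtrue \in \lip(\beta_C)$ derived just before \eqref{eq:IOC_origin_cont_func_set}, together with Jackson's theorem on the compact hyper-cube $X$, produces polynomial approximants $\tilde V_{d_V}$ of degree $\le 2d_V$ with $\|\tilde V_{d_V} - \Vtrue\|_{\infty,X} \to 0$. Combined with Assumption~\ref{ass:transition_kernel} and linearity of $q^*$, the candidate $\tilde\psi_{d_V} = \elltrue + (\alpha q^* - \proj^*)\tilde V_{d_V}$ converges uniformly to $\bar\psi = \elltrue + (\alpha q^* - \proj^*)\Vtrue$. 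By Proposition~\ref{prop:finite_time_horizon_equivalent}, $\langle\bar\psi,\hat\mu^{\pitrue,N}\rangle=0$, and by Assumption~\ref{ass:obj_func} the coefficient vector $\thetatrue_\ell$ itself is admissible, so the pair $(\thetatrue_\ell,\tilde V_{d_V})$ together with the SOS-perturbation argument of the previous step produces a sequence with objective tending to zero.

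Finally, for $M\to\infty$: the strong law of large numbers applied to the I.I.D.~trajectories $\{(\mfx_t^i,\mfa_t^i)\}_{i=1}^M$ gives $\bm h \to \langle \phi,\hat\mu^{\pitrue,N}\rangle$ almost surely, and boundedness of $\theta_\psi$ via \eqref{eq:theta_psi_bounded} guarantees uniform convergence of $\bm h^T\theta_\psi$ to its expectation over the feasible set. Sandwiching then yields the desired a.s.~limit. The principal obstacle is the first step: controlling the required polynomial degree in Putinar's certificate as the perturbation $\epsilon$ shrinks, while simultaneously enforcing the normalization $d^T\theta_\psi\ge 1$ without inflating the objective. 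A secondary subtlety, which makes the iterated-limit formulation (rather than a joint limit) convenient, is that the feasible set of \eqref{eq:IOC_approx2} enlarges monotonically in $d_\psi$ and the polynomial approximation space enlarges monotonically in $d_V$, so each inner limit can be passed under the outer ones without additional uniformity arguments.
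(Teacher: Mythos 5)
Your overall architecture — lower bound from the SOS/nonnegativity constraint, upper bound by constructing near-feasible polynomial certificates from $(\thetatrue_\ell,\Vtrue)$ via Putinar's Positivstellensatz and polynomial approximation of $\Vtrue$, then sandwiching — is essentially the same decomposition the paper uses (its Lemmas~\ref{lem: Approximating V by polynomials} and \ref{lem: Approximating phi by polynomials} play the roles of your $d_V$- and $d_\psi$-steps). However, there is a genuine gap in your final step, and it is precisely the point you dismiss as a ``secondary subtlety.'' You handle $M\to\infty$ by applying the strong law of large numbers componentwise to $\bm h$ and invoking boundedness of $\theta_\psi$ to get uniform convergence of $\bm h^T\theta_\psi$ ``over the feasible set.'' That argument only gives uniformity over the coefficient ball for a \emph{fixed} basis, i.e., for fixed $d_\psi$. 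But in the iterated limit of the theorem, $d_\psi\to\infty$ and $d_V\to\infty$ are taken \emph{before} $M\to\infty$, so the object whose population Bellman residual you must control as $M\to\infty$ is the outcome of the inner limits and ranges over an infinite-dimensional class; moreover the Lebesgue constants of the Lagrange basis are not uniformly bounded in $d_\psi$, so $\|\bm h-\mE[\bm h]\|_1$ is not controlled uniformly in the degree. Your claim that monotonicity of the feasible sets lets ``each inner limit be passed under the outer ones without additional uniformity arguments'' is therefore not correct: monotonicity controls the optimal \emph{empirical objective}, but the theorem is a statement about the \emph{population} residual of the optimizer, which is not monotone in $\mathbf{d}$.

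The missing ingredient is a uniform law of large numbers over the whole class of feasible residuals $\psi=\theta_\ell^T\varphi+(\alpha q^*-\proj^*)V$, independent of the polynomial degree. This is the paper's Lemma~\ref{lem: stochastic-consistency-for-infinite-dim}: by Assumptions~\ref{ass:transition_kernel} and \ref{ass:obj_func} together with the norm bounds \eqref{eq:l_psi_norm_bounds}--\eqref{eq:V_psi_norm_bounds}, every feasible $\psi$ lies in $\lip(\beta_\ell L_\varphi+\beta_V L_Q)$, so the class is equicontinuous and pointwise bounded; Arzel\`a--Ascoli gives compactness in the sup-norm, hence finite covering numbers, and a Glivenko--Cantelli argument yields $\sup_{\psi}|\frac{1}{M}\sum_{i,t}\psi(\bm\eta_t^i)-\langle\psi,\hat\mu^{\pitrue,N}\rangle|\to 0$ a.s. With that uniform bound in hand (holding for all $M\ge M_1'(\epsilon)$ simultaneously for every feasible $\psi$, whatever its degree), the paper can fix $M$, then choose $d_V'(\epsilon,M)$ and $d_\psi'(\epsilon,M,d_V)$, and chain the inequalities to get $\langle\hat\psi_{M,\mathbf{d}},\hat\mu^{\pitrue,N}\rangle\le 5\epsilon$. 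Without this uniformity your sandwich does not close. A smaller (reparable) looseness: your lower-bound step asserts uniform convergence of the Lagrange interpolants of $\varphi$ and $q^*r$ ``by standard polynomial-interpolation estimates,'' which again requires control of the Lebesgue constant of the node set; the paper instead takes sup-norm polynomial approximability as the hypothesis of Lemma~\ref{lem: Approximating phi by polynomials} and handles node conditioning only as a numerical matter.
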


\begin{remark}
The above theorem states the fact that, 
the occupation measure $\hat{\mu}^{\pitrue, N}$ that corresponds to the observed optimal state-action pairs $\{(x_t^i,a_t^i)\}_{t=1}^N$, $i=1,\ldots,M$ is ``nearly" optimal to the approximated cost function $\theta_{\ell, M}^{\mathbf{d},T}\;\varphi$ and the approximated value function $\theta_{V, M}^{\mathbf{d},T}\;r$. This is because the optimality conditions \eqref{eq:opt_cond_inf_dim} are ``nearly" satisfied.
\end{remark}

\begin{remark}
The convergence order in the above theorem: ``first increase the polynomial orders, then increase the data sample volume", actually coincide with the practical engineering practice. In particular, since data collection is typically expensive, if we get a bad estimate of the cost function, we would first increase the polynomial order to sufficiently high before we decide to collect more data.

\end{remark}

To prove Theorem \ref{thm: MAIN-CONSISTENCY-THEOREM}, we decompose the overall analysis into Lemma \ref{lem: stochastic-consistency-for-infinite-dim}, \ref{lem: Approximating V by polynomials} and \ref{lem: Approximating phi by polynomials}. 
First, the following lemma shows that the ``uniform law of large numbers" still holds in this infinite dimension function space.

\begin{lemma}\label{lem: stochastic-consistency-for-infinite-dim}
    Let $\left\{\eta_i\right\}_{i=1}^M$ be independent samples drawn from probability distribution $\nu$. Consider the stochastic optimization problem $\min_{\substack{f\in\mathcal{D}}} \mathbb{E}_{\bm{\eta} \sim \nu}[f(\bm{\eta})]$ and its corresponding empirical approximation problem $\min_{\substack{f\in\mathcal{D}}} \frac{1}{M}\sum_{i=1}^{M} f(\bm\eta_i)$, where $\mathcal{D}\subset \mathcal C(\mathbb{K})$ is equicontinuous \citep[Def~11.27]{rudin1987real}, pointwise bounded and closed.
    Then it holds that 
    \begin{equation*}
        \lim_{M\rightarrow\infty}\sup_{f\in\mathcal{D}}\left|\frac{1}{M}\sum_{i=1}^{M} f(\bm\eta_i)-\mathbb{E}[f]\right|=0, \text{a.s.}
    \end{equation*}
    Furthermore, let $f^\star$ and $f^\star_M$ denote the optimal solution of the original problem and the approximation problem, respectively. Then it holds that $\mathbb{E}_{\bm{\eta}\sim \nu}[f^\star_M(\bm{\eta})] \xrightarrow{a.s.}\mathbb{E}_{\bm{\eta}\sim \nu}[f^\star(\bm{\eta})]$. 
\end{lemma}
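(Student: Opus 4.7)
My plan is to establish the uniform convergence via a classical Arzel\`a--Ascoli plus finite $\epsilon$-net argument, and then to deduce convergence of the optimal values by a sandwich inequality that exploits the optimality of $f^\star$ and $f^\star_M$ in their respective problems.

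The starting point is that $\mathbb{K}$ is compact by Assumption~\ref{ass:compact_support}. Combined with the hypotheses that $\mathcal{D}\subset\mathcal{C}(\mathbb{K})$ is equicontinuous, pointwise bounded, and closed, the Arzel\`a--Ascoli theorem implies that $\mathcal{D}$ is compact in $\mathcal{C}(\mathbb{K})$ under the sup-norm, hence uniformly bounded and totally bounded. For any $\epsilon>0$, total boundedness yields a finite $\epsilon$-net $\{f_1,\ldots,f_K\}\subset \mathcal{D}$. Since each $f_k$ is bounded and the samples $\{\bm{\eta}_i\}$ are i.i.d.\ from $\nu$, the classical strong law of large numbers gives $\tfrac{1}{M}\sum_{i=1}^M f_k(\bm{\eta}_i)\to \mathbb{E}[f_k]$ a.s. for each $k$, and intersecting the $K$ null sets produces a single full-measure event on which the convergence is simultaneous for all $k$. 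Given any $f\in\mathcal{D}$, choose $k(f)$ with $\|f-f_{k(f)}\|_\infty\le \epsilon$; the triangle inequality then yields
\[
\left|\tfrac{1}{M}\sum_{i=1}^M f(\bm{\eta}_i)-\mathbb{E}[f]\right|\le 2\epsilon + \left|\tfrac{1}{M}\sum_{i=1}^M f_{k(f)}(\bm{\eta}_i)-\mathbb{E}[f_{k(f)}]\right|.
\]
Taking the supremum over $f\in\mathcal{D}$ and passing $M\to\infty$ gives $\limsup_M \sup_{f\in\mathcal{D}}|\cdot|\le 2\epsilon$ a.s.; letting $\epsilon \downarrow 0$ along a countable sequence delivers the first claim.

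For the second claim, I would chain the two optimality inequalities $\mathbb{E}[f^\star]\le \mathbb{E}[f^\star_M]$ (optimality of $f^\star$ for the true problem) and $\tfrac{1}{M}\sum_{i=1}^M f^\star_M(\bm{\eta}_i)\le \tfrac{1}{M}\sum_{i=1}^M f^\star(\bm{\eta}_i)$ (optimality of $f^\star_M$ for the empirical problem) with the uniform convergence just established, obtaining
\[
\mathbb{E}[f^\star_M]\le \tfrac{1}{M}\sum_{i=1}^M f^\star_M(\bm{\eta}_i)+o(1)\le \tfrac{1}{M}\sum_{i=1}^M f^\star(\bm{\eta}_i)+o(1)\xrightarrow{a.s.} \mathbb{E}[f^\star],
\]
which together with the lower bound forces the desired convergence.

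The hardest point will be the measurability subtlety: a priori, $\sup_{f\in\mathcal{D}}|\cdot|$ over an uncountable family need not be a random variable. The finite $\epsilon$-net argument sidesteps this because, on the full-measure event, the supremum is determined by evaluations over a countable dense subfamily of $\mathcal{D}$ (which exists since $\mathcal{D}$ is compact and hence separable). A minor but necessary check is that $f^\star$ and $f^\star_M$ actually exist, which follows from compactness of $\mathcal{D}$ in the sup-norm together with continuity of the two objective functionals in that topology.
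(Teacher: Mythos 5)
Your proposal is correct and follows essentially the same route as the paper's proof: Arzel\`a--Ascoli to get compactness and a finite $\epsilon$-net, the strong law of large numbers on the net centers combined with a triangle inequality to obtain uniform convergence, and the same sandwich of optimality inequalities for the convergence of optimal values. The only addition is your remark on measurability of the supremum, which the paper does not address but which your net argument handles in the same implicit way.
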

\begin{pf}
    By Arzela-Ascoli Theorem \citep[Thm~11.28]{rudin1987real}, every sequence in $\mathcal{D}$ has a uniform convergent subsequence on $\mK$, and hence $\mathcal{D}$ is compact. Thus $\mathcal{D}$ admits a finite cover number \citep[Def~2.1.5]{wellner2013weak} $N(\epsilon, \mathcal{D}, \|\cdot\|_\infty)$ for any $\epsilon > 0$. 
    Next, pick the centers $\left\{f_j\right\}_{j=1}^N$, it holds that $\forall f\in \mathcal{D}$, there is $f_j$, such that $\|f-f_j\|_\infty\leq\epsilon$. Let $E_M[f]:=\frac{1}{M}\sum_{i=1}^{M} f(\bm\eta_i)$, it holds that
    \begin{align*}
        &|E_M[f]\!-\!\mathbb{E}[f]|\\
        &=|E_M[f]\!-\!\mE[f_j]\!+\!\mE[f_j]\!-\!E_M[f_j]\!+\!E_M[f_j]\!-\!\mE[f_j]|\\
        &\le \left|E_M[f_j]\!-\!\mathbb{E}[f_j]\right|\!+\!|\mE[f_j]\!-\!\mE[f]]|\!+\!|E_M[f]\!-\!E_M[f_j]|\\
        &\le \left|E_M[f_j]\!-\!\mathbb{E}[f_j]\right|\!+\!\!\int\underbrace{|f_j-f|}_{\le \epsilon}d\nu\!+\!\frac{1}{M}\sum_{i=1}^M\underbrace{|f(\bm\eta_i)-f_j(\bm\eta_i)|}_{\le \epsilon}\\
        &\leq \left|E_M[f_j]-\mathbb{E}[f_j]\right|+2\epsilon.
    \end{align*}
    Therefore, it holds that $\sup_{f\in\mathcal D}|E_M[f]-\mE[f]|\le \max_{j=1:N}|E_M[f_j]\!-\!\mE[f_j]|+2\epsilon$.
    By strong laws of large numbers \citep[Thm~5.23]{kallenberg1997foundations}, as $M\rightarrow\infty$, it holds that $|E_M[f_j]-\mE[f_j]|\rightarrow 0$ a.s. for all $j=1:N$. And hence $\max_{j=1:N} |E_M[f_j]-\mE[f_j]|\rightarrow 0$ a.s..
    Thus $\limsup_{M\rightarrow\infty}\sup_{f\in\mathcal{D}}\left|E_M[f]-\mathbb{E}[f]\right|\leq 2\epsilon, \text{a.s.}$. Let $\epsilon\downarrow0$, then we have $\sup_{f\in\mathcal{D}}\left|E_M[f]-\mathbb{E}[f]\right|\xrightarrow{a.s.} 0$ as $M\rightarrow \infty$.

    Furthermore, note that the original and the approximated problems share the same feasible domain $\mathcal D$. And consequently, by the optimality of $f^\star$ and $f^\star_M$, respectively, it holds that $\mE[f^\star] \leq \mE[f^\star_M],E_M[f^\star_M] \leq E_M[f^\star]$. Therefore, it follows that
    \begin{align*}
        &\mathbb{E}[f^\star] \leq \mathbb{E}[f^\star_M] \leq E_M[f^\star_M] + \left|E_M[f^\star_M]-\mathbb{E}[f^\star_M]\right|\\
        &\leq E_M[f^\star] + \left|E_M[f^\star_M]-\mathbb{E}[f^\star_M]\right|\\
        &\leq \mathbb{E}[f^\star] + \underbrace{\left|E_M[f^\star]-\mathbb{E}[f^\star]\right|}_{\xrightarrow{a.s.}0}+\underbrace{\left|E_M[f^\star_M]-\mathbb{E}[f^\star_M]\right|}_{\xrightarrow{a.s.}0}.
    \end{align*}
    Thus we have $\mathbb{E}_{\bm{\eta}\sim \mu}[f^\star_M(\bm{\eta})] \xrightarrow{a.s.}\mathbb{E}_{\bm{\eta}\sim \mu}[f^\star(\bm{\eta})]$.
\end{pf}

The following Lemma \ref{lem: Approximating V by polynomials} shows that we can approximate $V$ with polynomials, reducing the problem to finite dimensions.

\begin{lemma}\label{lem: Approximating V by polynomials}
    Consider the optimization problem
    \begin{subequations}\label{eq:consistencyOriginal}
    \begin{align}
    \min_{\substack{\theta_\ell\in\mR^{n_\ell}\\ V\in \mathcal C(X)}} 
    &\;\langle\theta_\ell^T\varphi + T(V),\mu \rangle\\
    \st \quad &\;\theta_\ell^T\varphi + T(V)\!\ge\! 0,\forall (x,a)\in\mK,\label{eq:consistencyOriginal:cstr1}\\
    &\;\left\|\theta_\ell^T\varphi + T(V)\right\|_1 \geq 1,\label{eq:consistencyOriginal:cstr2}\\
    &\;\left\|\theta_\ell\right\|_1\leq \beta_\ell',\\
    &\;\left\|V\right\|_\infty\leq \beta_V',\label{eq:consistencyOriginal:cstr4}\\
    &\;V\in \lip(\beta_C)\label{eq:consistencyOriginal:cstr5},
    \end{align}
    \end{subequations}
    where $\mu\in \mathcal{M}_+$, $T:\mathcal C(X)\rightarrow \mathcal C(\mathbb{K})$ is a bounded linear operator.
On the other hand, its approximated problem takes the form
\begin{subequations}\label{eq:consistencyApproxStep2}
\begin{align}
    \min_{\substack{\theta_\ell\in\mR^{n_\ell}\\ V\in P_d(X)}} 
    &\;\langle\theta_\ell^T\varphi + T(V),\mu \rangle\\
    \st \quad &\;\eqref{eq:consistencyOriginal:cstr1}-\eqref{eq:consistencyOriginal:cstr5},\label{eq:consistencyApprox_cstr}
\end{align}
\end{subequations}
where $\left\{P_d\subset \mathcal C(X)\right\}_d^\infty$ is the set of $d$-order polynomials.
Let $(\theta_\ell^\star, V^\star)$ and $(\theta_{\ell, d}^\star , V_d^\star)$ be the optimal solution of \eqref{eq:consistencyOriginal} and \eqref{eq:consistencyApproxStep2}, respectively. If the feasible set of \eqref{eq:consistencyOriginal} has a strictly feasible solution, then it holds that
\begin{equation*}
    \lim_{d\rightarrow\infty}\langle\theta_{\ell, d}^{\star T}\varphi \!+\! T(V_d^\star),\mu \rangle = \langle\theta_\ell^{\star T}{\varphi} \!+\! T(V^\star),\mu \rangle.
\end{equation*}
\end{lemma}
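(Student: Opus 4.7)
The strategy is to establish $J_d^\star \to J^\star$, where $J_d^\star$ and $J^\star$ denote the optimal values of \eqref{eq:consistencyApproxStep2} and \eqref{eq:consistencyOriginal} respectively. Since $P_d(X) \subset \mathcal{C}(X)$, the feasible set of \eqref{eq:consistencyApproxStep2} is contained in that of \eqref{eq:consistencyOriginal}, which immediately gives $J_d^\star \geq J^\star$; only the upper bound $\limsup_{d \to \infty} J_d^\star \leq J^\star$ requires work. The feasible set of \eqref{eq:consistencyOriginal} is convex --- the constraint \eqref{eq:consistencyOriginal:cstr2} becomes linear once \eqref{eq:consistencyOriginal:cstr1} is imposed, and \eqref{eq:consistencyOriginal:cstr4}, \eqref{eq:consistencyOriginal:cstr5} are sublevel sets of norms --- and by hypothesis it admits a strictly feasible point $(\theta_\ell^{sf}, V^{sf})$.

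The plan is to construct a near-optimal polynomial-valued feasible point via three steps. First, for a small parameter $\lambda \in (0,1)$, form the convex combination $(\theta_\ell^\lambda, V^\lambda) := (1-\lambda)(\theta_\ell^\star, V^\star) + \lambda (\theta_\ell^{sf}, V^{sf})$; by convexity of the feasible set, this is feasible for \eqref{eq:consistencyOriginal}, and each of \eqref{eq:consistencyOriginal:cstr1}--\eqref{eq:consistencyOriginal:cstr4} is satisfied with strict slack of order $\lambda$, while the Lipschitz constant of $V^\lambda$ is below $\beta_C$ by an amount of order $\lambda$. Second, I would approximate $V^\lambda$ by polynomials $V_d^\lambda \in P_d(X)$ with $\|V_d^\lambda - V^\lambda\|_\infty \to 0$ and $V_d^\lambda \in \lip(\beta_C)$ for all large $d$. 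Third, I would verify that $(\theta_\ell^\lambda, V_d^\lambda)$ is feasible for \eqref{eq:consistencyApproxStep2} for $d$ large enough and that its objective value lies within $O(\lambda) + o_d(1)$ of $J^\star$; letting $d \to \infty$ and then $\lambda \to 0$ yields the desired upper bound.

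The main obstacle is the second step, since the Weierstrass theorem alone guarantees only uniform convergence and places no bound on the Lipschitz constant of the approximating polynomial (adding a fast-oscillating term such as $\lambda \sin(dx)$ preserves uniform smallness but blows up the Lipschitz constant). To resolve this, I would first extend $V^\lambda$ Lipschitz-continuously to $\mR^{n_x}$ (using, e.g., McShane's formula) and mollify with a smooth compactly supported kernel of width $\sigma$ to obtain $V^{\lambda,\sigma} \in C^\infty$; convolution with an $L^1$-normalised kernel does not increase the Lipschitz constant, and $V^{\lambda,\sigma} \to V^\lambda$ uniformly as $\sigma \to 0$. For smooth functions on the hypercube $X$ (Assumption~\ref{ass:compact_support}), one can then build a simultaneous $C^1$ polynomial approximation $V_d^\lambda$ via tensor-product Bernstein operators or Jackson-type estimates, so that $\sup_X \|\nabla V_d^\lambda\| \to \sup_X \|\nabla V^{\lambda,\sigma}\| \leq \mathrm{Lip}(V^\lambda) \leq \beta_C - c\lambda$; consequently $V_d^\lambda \in \lip(\beta_C)$ for $d$ large. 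Finally, the boundedness of $T$ transfers $\|V_d^\lambda - V^\lambda\|_\infty \to 0$ into $\|T(V_d^\lambda) - T(V^\lambda)\|_\infty \to 0$, which is enough both to propagate the order-$\lambda$ slack into the constraints of \eqref{eq:consistencyApproxStep2} and to bound the objective gap as claimed, closing the argument.
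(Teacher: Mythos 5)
Your proposal is correct and follows essentially the same route as the paper's proof: perturb the optimum toward a strictly feasible point to create uniform constraint slack, extend via McShane, mollify to preserve the Lipschitz bound, and then use a simultaneous $C^1$ polynomial approximation (the paper applies Stone--Weierstrass to the gradient of the mollified function, which plays the same role as your Bernstein/Jackson estimates) so that the polynomial approximant stays in $\lip(\beta_C)$. The only cosmetic difference is that you make the convex-combination step explicit, whereas the paper simply asserts the existence of a strictly feasible point that is $\epsilon/2$-close in objective value.
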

\begin{pf}
    
    Since \eqref{eq:consistencyOriginal} is convex and its feasible set has a non-empty interior, for all $\epsilon$, there must exist a strictly feasible solution $(\theta_0,V_0)$ and $\epsilon_1$ such that $\forall (\theta,V)\in \{(\theta,V)\in\mR^{n_\ell}\times \mathcal{C}(X)\;|\; \|\theta - \theta_0\|_2+\|V-V_0\|_{\infty,\mK}< \epsilon_1\}$ is still feasible
    and $\langle (\theta_0 - \theta_\ell^\star)^T\varphi + T(V_0-V^\star), \mu \rangle \leq \epsilon/2$.
    Moreover, it also holds that $V_0\in\lip(\beta_C-\epsilon_C)$ for some $\epsilon_C>0$.
    Next, we first extend $V_0$ from domain $\mK$ to $\mR^n$ by Mcshane's theorem \citep{mcshane1934extension}, namely,
    \begin{align*}
    \tilde{V}_0(x) = \min_{x\in\mK} \{V_0(y)+(\beta_C-\epsilon_C)\|x-y\|\},
    \end{align*}
    and notably, $\tilde{V}_0=V_0$, $\forall x\in\mK$, and $\tilde{V}_0\in\lip(\beta_C-\epsilon_C)$.
    Next, let $\rho_\delta(x)$ be a mollifier and we mollify $\tilde{V}_0$ by $V_\delta(x) = (\tilde V_0\circledast \rho_\delta)(x)$, where $\circledast$ denotes the convolution.
    Hence 
    there exists a $\delta_\epsilon$, such that $\|V_{\delta_\epsilon}-\tilde V_0\|_{\infty,\mK}\le\min\{\epsilon/(4\|T^*\|\|\mu\|_{\tv}),\epsilon_1/2\}$, where $\|T^*\|$ is the operator norm of the dual $T^*$.
    
    Furthermore, it also preserves the Lipschitz constant, namely,
    \begin{align*}
    &\left|V_{\delta_\epsilon}(x)-V_{\delta_\epsilon}(y)\right| = \left|\int_{\mR^n}[\tilde{V}_0(x-z)-\tilde V_0(y-z)]\rho_{\delta_\epsilon}(z)\mathrm dz\right|\\
    &\le\int_{\mR^n}\left|\tilde{V}_0(x-z)-V_0(y-z)\right|\rho_{\delta_\epsilon}(z)\mathrm dz\\
    &\le\int_{\mR^n}(\beta_C-\epsilon_C)\|x-y\|_2\rho_{\delta_\epsilon}(z)\mathrm dz\\
    &=(\beta_C-\epsilon_C)\|x-y\|_2\underbrace{\int_{\mR^n}\rho_{\delta_\epsilon}(z)\mathrm dz}_{=1}.
    \end{align*}
    Moreover,
    it follows from Stone-Weierstrass theorem that there exists a $d_\epsilon$, such that for all $d>d_\epsilon$, $\|\nabla V_d-\nabla V_{\delta_\epsilon}\|_{\infty,X}\le\min\{\epsilon/(4\|T^*\|\|\mu\|_{\tv}\cdot\diam(X)),\epsilon_C,$ $\epsilon_1/(2\cdot\diam(X))\}$, where $V_d\in P_d$. 
And hence $\|V_d-V_{\delta_\epsilon}\|_{\infty,X}<\min\{\epsilon/(4\|T^*\|\|\mu\|_{\tv}),\epsilon_1/2\}$.     
    Therefore, it holds that $\|V_d- V_{0}\|_{\infty,X}\le\|V_d-V_{\delta_\epsilon}\|_{\infty,X}+\|V_{\delta_\epsilon}-\tilde V_{0}\|_{\infty,X} \le\min\{\epsilon/(2\|T^*\|\|\mu\|_{\tv}),\epsilon_1\}$ and $\|\nabla V_d\|_{\infty,X}\le \|\nabla V_{\delta_\epsilon}\|_{\infty,X}+\|\nabla V_d-\nabla V_{\delta_\epsilon}\|_{\infty,X}\le \beta_C-\epsilon_C+\min\{\epsilon,\epsilon_C\}\le\beta_C$.
    This implies that $(\theta_0,V_d)$ is also feasible to both \eqref{eq:consistencyOriginal} and \eqref{eq:consistencyApproxStep2}.
    Therefore, it holds that
    \begin{equation*}
        \begin{aligned}
            &\langle\theta_{\ell, d}^{\star T}\varphi \!+\! T(V_d^\star),\mu \rangle \leq \langle\theta_0^{T}\varphi \!+\! T(V_d),\mu \rangle\\
            &= \langle\theta_0^{T}\varphi \!+\! T(V_0),\mu \rangle + \langle V_d-V_0, T^*\mu\rangle\\
            &\le  \langle\theta_0^{T}\varphi \!+\! T(V_0),\mu \rangle + \underbrace{\|V_d-V_0\|_\infty}_{\le \epsilon/(2\|T^*\|\|\mu\|_{\tv})}\|T^*\|\|\mu\|_{\tv}\\
            &\leq\! \langle\theta_{\ell}^{\star T}\varphi \!+\! T(V^\star),\mu \rangle \!+\! \underbrace{\langle (\theta_0 \!-\! \theta_\ell^\star)^T\varphi \!+\! T(V_0\!-\!V^\star), \mu \rangle}_{\le \epsilon/2} \!+\! \epsilon/2\\
            &\leq \langle\theta_{\ell}^{\star T}\varphi \!+\! T(V^\star),\mu \rangle + \epsilon.
        \end{aligned}
     \end{equation*}

    This implies that for any $\epsilon>0$, a sufficiently high approximation order $d_\epsilon$ guarantees an $\epsilon$-close optimal solution. Consequently, the optimal value to the approximated problem converges to the optimal value of the original problem as the polynomial approximation order tends to infinity.
\end{pf}

Next, Lemma \ref{lem: Approximating phi by polynomials} shows that approximating the basis functions by polynomials also makes sense.

\begin{lemma}\label{lem: Approximating phi by polynomials}
    For $\phi:\mK\rightarrow R^n$ that is continuous and $\mu\in\mathcal{M}_+$, consider the optimization problem
    \begin{subequations}\label{eq:consistencyOriginalFiniteDim1}
    \begin{align}
    \min_{\substack{\theta\in\mR^n}} 
    &\;\langle\theta^T\phi,\mu \rangle\\
    \st &\;\theta^T\phi\!\ge\! 0,\forall \eta\in\mK,\\
    &\;\left\|\theta^T\phi\right\|_1 \geq 1,\\
    &\;\left\|\theta\right\|_1\leq \beta,
    \end{align}
    \end{subequations}
    as well as the optimization problem obtained by replacing the basis functions $\phi$ by a polynomial approximation $\tilde{{\phi}}_d$ of order $d$, that is, the approximated problem
    \begin{subequations}\label{eq:consistencyApproxStep1}
    \begin{align}
    \min_{\substack{\theta\in\mR^n}} 
    &\;\langle\theta^T\tilde{{\phi}}_d,\mu \rangle\\
    \st &\;\theta^T\tilde{{\phi}}_d\!\ge\! 0,\forall \eta\in\mK,\label{eq:nonnegative_cstr}\\
    &\;\left\|\theta^T\tilde{{\phi}}_d\right\|_1 \geq 1,\label{eq:1-norm_regularity_cstr}\\
    &\;\left\|\theta\right\|_1\leq \beta/2.\label{eq:theta_norm_cstr}
    \end{align}
    \end{subequations}
    Let $\theta^\star$ and $\theta_{d}^\star $ be the optimal solution to \eqref{eq:consistencyOriginalFiniteDim1} and \eqref{eq:consistencyApproxStep1} respectively. Suppose $\left\|\theta^\star\right\|_1 \leq \beta/4$ and the first elements in $\phi$ and $\tilde{\phi}_d$ are both $1$. 
    If $\lim_{d\rightarrow\infty}\left\|\phi - \tilde{\phi}_d\right\|_\infty = 0$, then there exists $\theta_{d}^\star +a_d$ that is feasible for \eqref{eq:consistencyOriginalFiniteDim1} with $\lim_{d\rightarrow\infty}\left\|a_d\right\|_1 = 0$ and $\lim_{d\rightarrow\infty}\langle\theta_{d}^{\star T}\phi,\mu \rangle = \langle\theta^{\star T}{\phi} ,\mu \rangle$.
\end{lemma}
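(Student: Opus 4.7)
The plan is to construct explicit small perturbations along the first coordinate that convert each optimizer into a feasible point of the other problem, exploiting the slack $\|\theta^\star\|_1 \le \beta/4$ together with the fact that the first components of $\phi$ and $\tilde\phi_d$ are both the constant $1$. Let $\delta_d := \|\phi - \tilde\phi_d\|_\infty \to 0$; adding a small multiple of $e_1$ (the first standard basis vector) shifts $\theta^T\tilde\phi_d$ (resp.\ $\theta^T\phi$) uniformly on $\mK$ and absorbs errors of order $\delta_d$ in the two polynomial inequality constraints.

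First I would verify that $\theta_0 := \theta^\star + c_d e_1$ with $c_d := (\beta/4)\delta_d$ is feasible for \eqref{eq:consistencyApproxStep1}. The norm bound \eqref{eq:theta_norm_cstr} holds since $\|\theta_0\|_1 \le \beta/4 + c_d \le \beta/2$ for $d$ large. For \eqref{eq:nonnegative_cstr}, the decomposition $\theta^{\star T}\tilde\phi_d = \theta^{\star T}\phi + \theta^{\star T}(\tilde\phi_d - \phi)$ gives $\theta^{\star T}\tilde\phi_d \ge -\|\theta^\star\|_1\delta_d \ge -c_d$, hence $\theta_0^T\tilde\phi_d = \theta^{\star T}\tilde\phi_d + c_d \ge 0$. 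Since $\theta_0^T\tilde\phi_d \ge 0$, the constraint \eqref{eq:1-norm_regularity_cstr} reduces to $\int_{\mK}\theta_0^T\tilde\phi_d\,\mathrm d\eta \ge \int_{\mK}\theta^{\star T}\phi\,\mathrm d\eta \ge 1$, where the first inequality uses that the uniform shift along $e_1$ dominates the approximation error in the integral and the second uses $\|\theta^{\star T}\phi\|_1 \ge 1$ combined with $\theta^{\star T}\phi \ge 0$.

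Next, setting $a_d := b_d e_1$ with $b_d := (\beta/2)\delta_d$, I would verify analogously that $\theta_d^\star + a_d$ is feasible for \eqref{eq:consistencyOriginalFiniteDim1}: the bound $\|\theta_d^\star + a_d\|_1 \le \beta/2 + b_d \le \beta$ handles the norm constraint for $d$ large; $(\theta_d^\star + a_d)^T\phi \ge \theta_d^{\star T}\tilde\phi_d - \|\theta_d^\star\|_1\delta_d + b_d \ge 0$ handles nonnegativity; and the $L^1$ constraint collapses to an integral comparison identical to the previous one. Clearly $\|a_d\|_1 = b_d \to 0$.

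Finally I would chain the two optimality inequalities. Feasibility of $\theta_d^\star + a_d$ for \eqref{eq:consistencyOriginalFiniteDim1} gives
\[
\langle\theta^{\star T}\phi,\mu\rangle \le \langle\theta_d^{\star T}\phi,\mu\rangle + b_d\,\mu(\mK).
\]
Feasibility of $\theta_0$ for \eqref{eq:consistencyApproxStep1} gives $\langle\theta_d^{\star T}\tilde\phi_d,\mu\rangle \le \langle\theta^{\star T}\tilde\phi_d,\mu\rangle + c_d\,\mu(\mK)$, and switching to the $\phi$ integrands via the bound $|\langle\theta^T(\phi - \tilde\phi_d),\mu\rangle| \le \|\theta\|_1\delta_d\|\mu\|_{\tv}$ applied to $\theta = \theta^\star$ and $\theta = \theta_d^\star$ yields
\[
\langle\theta_d^{\star T}\phi,\mu\rangle \le \langle\theta^{\star T}\phi,\mu\rangle + (3\beta/4)\delta_d\|\mu\|_{\tv} + c_d\,\mu(\mK).
\]
Sandwiching and letting $d\to\infty$ produces $\langle\theta_d^{\star T}\phi,\mu\rangle \to \langle\theta^{\star T}\phi,\mu\rangle$. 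The main obstacle is nothing deep, only the bookkeeping of which pair $(\phi,\tilde\phi_d)$ appears at each step; the essential trick is that the constant basis element serves as a scalar ``buffer'' that restores both polynomial inequality constraints at a cost vanishing in $d$, and the slack $\beta/4$ in $\|\theta^\star\|_1$ ensures the buffered iterate stays within the norm budget.
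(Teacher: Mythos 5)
Your proposal is correct and follows essentially the same route as the paper's proof: perturb along the constant first basis element by an amount of order $\|\theta\|_1\,\|\phi-\tilde{\phi}_d\|_\infty$ to restore the nonnegativity and $L^1$ constraints, use the norm slack ($\beta/4$ versus $\beta/2$ versus $\beta$) to keep the buffered points within budget, and then sandwich the objective values via the two optimality inequalities and the uniform approximation bound. The only cosmetic difference is that you size the buffers using the a priori bounds $\beta/4$ and $\beta/2$ rather than the exact norms $\|\theta^\star\|_1$ and $\|\theta_d^\star\|_1$, which changes nothing substantive.
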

\begin{pf}  
Since $\lim_{d \to \infty} \|\phi - \tilde{\phi}_d\|_\infty = 0$, for any small enough $\epsilon<1$, there exists a $d_\epsilon$ such that $\|\phi - \tilde{\phi}_d\|_\infty \leq \epsilon$ for every $d \geq d_\epsilon$.
This further implies that for all $\eta \in \mathbb{K}$,
\begin{align}
&| \theta^T \phi(\eta)\! -\! \theta^T \tilde{\phi}_d(\eta)|\! \le\!\! \sum_{i=1}^n\!|\theta_i|\underbrace{|\phi_i(\eta)\!-\!\tilde\phi_{d,i}(\eta)|}_{\le \epsilon}\!\le\! \|\theta\|_1 \epsilon,
\label{eq:abs_value_ineq}
\end{align}
where $\theta_i$, $\phi_i$ and $\tilde{\phi}_{d,i}$ are the $i$-th element of the corresponding vectors, respectively. 
Moreover, let $a_d= [\|\theta_{d}^\star \|_1 \epsilon, 0, \ldots, 0]^T$. Since $\theta_{d}^\star $ minimizes the approximated problem \eqref{eq:consistencyApproxStep1}, it must satisfy the constraints \eqref{eq:nonnegative_cstr}, \eqref{eq:1-norm_regularity_cstr} and \eqref{eq:theta_norm_cstr}; and since we assume the first elements of both $\phi$ and $\tilde \phi_d$ are both 1, 
it follows from \eqref{eq:abs_value_ineq} that
$(\theta_{d}^\star  + a_d)^T \phi(\eta) = \theta_{d}^{\star T} \phi(\eta)+\|\theta_{d}^\star  \|_1\epsilon\ge \theta_{d}^{\star T}\tilde{\phi}_d>0$.

Hence it further holds that
\begin{equation*}
    \begin{aligned}
     & \|(\theta_{d}^\star  \!+\! a_d)^T \phi\|_1\! =\!\! \int_{\mathbb{K}} (\theta_{d}^\star  \!+\! a_d)^T \phi(\eta) \mathrm d\eta
       \! \geq \!\! \int_{\mathbb{K}} \theta_{d}^{\star T} \tilde{\phi}_d(\eta) \mathrm d\eta\\
      &=\|\theta_d^\star\tilde\phi_d\|_1 \geq 1.
    \end{aligned}
\end{equation*}
Furthermore, $\theta_{d}^\star $ also must satisfy \eqref{eq:theta_norm_cstr} and hence
$ \| \theta_{d}^\star  + a_d \|_1 \le \| \theta_{d}^\star  \|_1 +\|a_d\|_1= \| \theta_{d}^\star  \|_1 +  \|\theta_d^\star\|_1 \epsilon \leq \beta$.
Thus $\theta_{d}^\star  + a_d$ is also feasible to the original problem \eqref{eq:consistencyOriginalFiniteDim1}, and hence it holds that $\langle \theta^{\star T} \phi, \mu \rangle\leq \langle (\theta_{d}^\star +a_d)^T \phi, \mu \rangle$ by optimality.

Similarly, let $b = [ \|\theta^\star\|_1 \epsilon, 0, \ldots, 0]^T$ and in view of we assume $\|\theta^\star\|_1\le \beta/4$, it follows that $\theta^\star + b$ is also feasible to the approximated problem \eqref{eq:consistencyApproxStep1} and it holds that $\langle \theta_{d}^{\star T} \tilde{\phi}_d, \mu \rangle  \leq \langle (\theta^\star+b)^T \tilde{\phi}_d, \mu \rangle$ by optimality.
Notably, since $\mu$ is a positive measure and in view of \eqref{eq:abs_value_ineq}, 
we can bound the objective function values by
\begin{equation*}
    \begin{aligned}
        &\langle (\theta_{d}^\star  + a_d)^T \phi, \mu \rangle = \langle \theta_{d}^{\star T} \phi, \mu \rangle + \langle \left\|\theta_{d}^\star \right\|_1\epsilon, \mu \rangle \\
        &\le \langle \theta_{d}^{\star T} \phi, \mu \rangle \!+\! 2\langle \underbrace{\left\|\theta_{d}^\star \right\|_1}_{\le \beta/2}\epsilon, \mu \rangle\leq \langle \theta_{d}^{\star T} \tilde{\phi}_d, \mu \rangle \!+\! \epsilon\langle \beta, \mu \rangle,
    \end{aligned}
\end{equation*}
and similarly, it holds that $\langle (\theta^\star + b)^T \tilde{\phi}_d, \mu \rangle \leq \langle \theta^{\star T} \phi, \mu \rangle + \epsilon\langle \beta/2, \mu \rangle$.
Therefore, it follows that
\begin{align*}
    &\langle\theta_{d}^{\star T}\phi,\mu \rangle = \langle (\theta_{d}^\star  + a_d)^T \phi, \mu \rangle - \langle \|\theta_{d}^\star \|_1 \epsilon, \mu \rangle\\
    &\;\geq \langle \theta^{\star T} \phi, \mu \rangle - \langle \|\theta_{d}^\star \|_1 \epsilon, \mu \rangle
    \geq \langle \theta^{\star T} \phi, \mu \rangle - \epsilon\langle \beta/2, \mu \rangle,\\
    &\langle\theta_{d}^{\star T}\phi,\mu \rangle\!\le\!\langle (\theta_{d}^\star  + a_d)^T \phi, \mu \rangle\!\leq\!\langle \theta_{d}^{\star T} \tilde{\phi}_d, \mu \rangle + \epsilon\langle \beta, \mu \rangle\\
    &\;\leq \langle (\theta^\star\! +\! b)^T \tilde{\phi}_d, \mu \rangle \!+\! \epsilon\langle \beta, \mu \rangle\!\leq\! \langle \theta^{\star T} \phi, \mu \rangle \!+\! \epsilon\langle 3 \beta/2, \mu \rangle.
\end{align*}
Since $\epsilon$ can be made arbitrarily small, the claim follows.
\end{pf}
Now that we are ready to prove Theorem \ref{thm: MAIN-CONSISTENCY-THEOREM}.
\begin{proof}[Proof of Theorem~4.2]
Let $(\theta_\ell,V,\psi)$ be any feasible solution to \eqref{eq:IOC_approx1}. 
By Assumption~\ref{ass:transition_kernel} and \ref{ass:obj_func}, and since constraints \eqref{eq:IOC_approx1_constraints} holds, it holds that for any $\eta_1,\eta_2$
\begin{align*}
&|\ell(\eta_1)\!+\!\int V(\cdot) Q(\cdot|\eta_1)\!-\!\ell(\eta_2)\!-\!\int V(\cdot) Q(\cdot|\eta_2)|\\
&\le|\ell(\eta_1)-\ell(\eta_2)|+|\int V(\cdot) Q(\cdot|\eta_1)-\int V(\cdot) Q(\cdot|\eta_2)|\\
&\le \left(\|\theta_\ell\|_1 L_\varphi+\|V\|_\infty L_Q\right)\|\eta_1-\eta_2\|_2,\\
&\le \left(\beta_\ell L_\varphi+\beta_V L_Q\right)\|\eta_1-\eta_2\|_2.
\end{align*}
Hence $\psi\in\lip(\beta_\ell L_\varphi+\beta_V L_Q)$, therefore $\psi$ lives in an equicontinuous set and Lemma~\ref{lem: stochastic-consistency-for-infinite-dim} applies. 
Therefore there exists $M_1'(\epsilon)$, such that for all $M\geq M_1'(\epsilon)$ and every feasible $\psi$,
\begin{align}
    \left|\langle \psi, \hat{\mu}^{\pitrue,N} \rangle- \frac{1}{M}\sum_{i=1}^M\sum_{t=1}^N \psi(\bm\eta_t^i) \right| \leq \epsilon,\quad\text{a.s.},
    \label{eq:psi_ulln}
\end{align}
where $\bm\eta_t^i:=(\mfx_t^i,\mfa_t^i)$.
Moreover, let $(\theta_{\ell,M}, V_M)$ be optimal to the finite-sample problem
\begin{equation}
\begin{aligned}
\min_{\theta_{\ell},V\in\mathcal{C}(X)} &\;\;\frac{1}{M}\sum_{i=1}^M\!\sum_{t=1}^N\psi(\bm\eta_t^i;\theta_\ell,V)\\
\st &\;\; \eqref{eq:IOC_approx1_constraints}.
\end{aligned}
\label{eq:data_psi_ioc_approx}
\end{equation}
By the second conclusion in Lemma~\ref{lem: stochastic-consistency-for-infinite-dim}, 
since the optimal value of \eqref{eq:IOC_approx1} is zero (cf. Proposition~\ref{prop:finite_time_horizon_equivalent}), we have
\begin{align}
    \langle \psi(\cdot;\theta_{\ell,M}, V_M),\!\hat\mu^{\pitrue,N} \rangle\leq \epsilon, \; \text{a.s.}\label{eq: M approx}
\end{align}

Next, recall that $(\thetatrue, \Vtrue)$ (and the corresponding $\bar\psi$) is an optimal solution of \eqref{eq:IOC_approx1}, and is therefore feasible for \eqref{eq:data_psi_ioc_approx}.
Since the bounds $\beta_\ell$, $\beta_V$ ($\beta_\psi$) can be chosen aribrarily large, we can construct a strictly feasible point by adding a positive constant to $\Vtrue$. 
Then by Lemma~\ref{lem: Approximating V by polynomials}, there exists $d_V'(\epsilon,M)$, such that for any $d_V\geq d_V'(\epsilon,M)$,
\begin{align}
    \left| \!\frac{1}{M}\sum_{i=1}^M\!\sum_{t=1}^N\left(\psi(\bm\eta_t^i;\theta_{\ell,M}^{d_V},\theta_{V,M}^{d_V,T}r)\!-\!\psi(\bm\eta_t^i;\theta_{\ell,M},V_M)\right)\!\right|\!\leq\!\epsilon,\label{eq: dv approx}
\end{align}
where $ \theta_{\ell,M}^{d_V},\theta_{V,M}^{d_V} $ is optimal to
\begin{equation}
\begin{aligned}
\min_{\theta_{\ell},\theta_V} &\;\;\frac{1}{M}\sum_{i=1}^M\!\sum_{t=1}^N\psi(\bm\eta_t^i;\theta_\ell,\theta_V^Tr)\\
\st &\;\; \psi(x,a;\theta_\ell,\theta_V^Tr)\!\ge\! 0,\forall (x,a)\!\in\!\mK,\\
&\;\;\int \psi(x,a;\theta_\ell,\theta_V^Tr) \mathrm dx\mathrm da\ge 1,\quad \eqref{eq:theta_V_ell_bounded}.
\end{aligned}
\label{eq:data_psi_f_ioc_approx}
\end{equation}

Finally, note that the continuous feature function vector $\varphi$ and cost-to-go $q^*V$ \eqref{eq:cost_to_go} can be uniformly approximated to arbitrary accuracy by polynomials on $\mathbb{K}$. 
Moreover, also note that we may, without loss of generality, assume the first element in $\varphi$ is one (this is the same as adding a constant in the cost function and does not change the optimal control policy), thus Lemma.\ref{lem: Approximating phi by polynomials} applies. 
Therefore, there exists $d_\psi'(\epsilon,M,d_V)$ such that for all $d_\psi\ge d_\psi'(\epsilon,M,d_V)$ and the optimal solution $(\theta_{\ell, M}^{\mathbf{d}}, \theta_{V, M}^{\mathbf{d}})$ of \eqref{eq:IOC_approx2}, it holds that
\begin{align}
    \left| \frac{1}{M}\!\sum_{i=1}^M\!\sum_{t=1}^N\!\!\left(\!\psi(\bm\eta_t^i;\theta_{\ell,M}^{\mathbf{d}},\theta_{V,M}^{\mathbf{d},T}r)\!-\!\psi(\bm\eta_t^i;\theta_{\ell,M}^{d_V},\theta_{V,M}^{d_V,T}r)\!\right)\!\right| \leq \epsilon.\label{eq: dpsi approx}
\end{align}
On the other hand, there also exists $a_{d_\psi}$ with $ \|a_{d_\psi}\|_1\leq \epsilon/\|\varphi\|_\infty $ such that
$\psi(\cdot;\theta_{\ell,M}^{\mathbf{d}}+a_{d_\psi},\theta_{V,M}^{\mathbf{d},T}r)$
is feasible for \eqref{eq:data_psi_f_ioc_approx}.
Thus $\psi(\eta;\theta_{\ell,M}^{\mathbf{d}}+a_{d_\psi},\theta_{V,M}^{\mathbf{d},T}r) \geq 0,\forall \eta\in\mK$, and hence
\begin{align*}
    &\langle \psi(\bm\eta_t^i;\theta_{\ell,M}^{\mathbf{d}},\theta_{V,M}^{\mathbf{d},T}r), \hat{\mu}^{\pitrue,N} \rangle \\
    &= \langle \psi(\bm\eta_t^i;\theta_{\ell,M}^{\mathbf{d}}+a_{d_\psi},\theta_{V,M}^{\mathbf{d},T}r), \hat{\mu}^{\pitrue,N} \rangle - \langle a_{d_\psi}^T\varphi, \hat{\mu}^{\pitrue,N} \rangle \\
    &\ge -\langle a_{d_\psi}^T\varphi, \hat{\mu}^{\pitrue,N} \rangle \ge -\|a_{d_\psi}\|_1\|\varphi\|_\infty\underbrace{\|\hat{\mu}^{\pitrue,N}\|_{\tv}}_{=N} \geq -N\epsilon.
\end{align*}

Now, for all $ M\geq M_1'(\epsilon), d_V\geq d_V'(\epsilon, M), d_\psi\geq d_\psi'(\epsilon,M,d_V)$, the following holds a.s. 
\begin{align*}
    &\langle \psi(\bm\eta_t^i;\theta_{\ell,M}^{\mathbf{d}},\theta_{V,M}^{\mathbf{d},T}r), \hat{\mu}^{\pitrue,N} \rangle \\
    &\overset{\eqref{eq:psi_ulln}}{\le} \frac{1}{M}\!\sum_{i=1}^M\!\sum_{t=1}^N\!\psi(\bm\eta_t^i;\theta_{\ell,M}^{\mathbf{d}},\theta_{V,M}^{\mathbf{d},T}r) + \epsilon \\
    &\overset{\eqref{eq: dpsi approx}}{\le} \frac{1}{M}\!\sum_{i=1}^M\!\sum_{t=1}^N\!\psi(\bm\eta_t^i;\theta_{\ell,M}^{d_V},\theta_{V,M}^{d_V,T}r) +2\epsilon \\
    &\overset{\eqref{eq: dv approx}}{\le} \frac{1}{M}\!\sum_{i=1}^M\!\sum_{t=1}^N\!\psi(\bm\eta_t^i;\theta_{\ell,M},V_M) + 3\epsilon\\
    &\overset{\eqref{eq:psi_ulln}}{\le} \langle \psi(\cdot;\theta_{\ell,M}, V_M),\!\hat\mu^{\pitrue,N} \rangle + 4\epsilon\overset{\eqref{eq: M approx}}{\le} 5\epsilon.
\end{align*}
and the claim follows.
\end{proof}
Furthermore, if the optimal solution $\theta_\ell^\star = \thetatrue$ is unique up to scalar normalization, namely, all optimal solutions differ only by a positive scaling factor, then the normalized approximated solution also converges to the normalized true optimum.
Otherwise, the true cost function parameters $\thetatrue$ is not identifiable under the current IOC problem structure. 
Nevertheless, the expert control policy can still be reconstructed by solving \eqref{eq:policy_approx_reconstruction}. The subsequent corollary establishes that such reconstruction error is also bounded.

\begin{corollary}
Suppose $\pi(x;\cdot):\Theta\rightarrow A$ is continuous for any $x$, and $\Theta$ is compact. 
    Let $\hat\pi_{M,\mathbf{d}}$ be the solution of \eqref{eq:policy_approx_reconstruction}, and suppose that there exists a $\theta_{\pitrue}$ such that $\pitrue=\pi(x;\theta_{\pitrue})$. It holds that
    \begin{align*}
        \lim_{M\rightarrow\infty}\lim_{d_V\rightarrow\infty}\lim_{d_\psi\rightarrow\infty}\langle \|\hat\pi_{M,\mathbf{d}}-\pitrue\|_2^2, \proj\hat\mu^{\pitrue,N} \rangle = 0,\;\text{a.s.}
    \end{align*}
\end{corollary}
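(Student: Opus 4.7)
The plan is to combine the optimality of $\hat{\pi}_{M,\mathbf{d}}$ in \eqref{eq:policy_approx_reconstruction} with the consistency guarantee of Theorem~\ref{thm: MAIN-CONSISTENCY-THEOREM}. First, I would use $\theta_{\pitrue}$ as a test point in the objective of \eqref{eq:policy_approx_reconstruction}. Since $\pitrue=\pi(\cdot;\theta_{\pitrue})$ and Assumption~\ref{ass:stationary_markov_policy} gives $\mfa_t^i=\pitrue(\mfx_t^i)$, the deviation term $\|\mfa_t^i-\pi(\mfx_t^i;\theta_{\pitrue})\|_2^2$ vanishes and the objective reduces to $\frac{1}{M}\sum_{i,t}\hat\psi_{M,\mathbf{d}}(\mfx_t^i,\mfa_t^i)$. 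By optimality of $\hat\pi_{M,\mathbf{d}}$, combined with the nonnegativity of $\hat\psi_{M,\mathbf{d}}$ on $\mK$ (which follows from constraint \eqref{eq:theta_psi_constraint} and Putinar's positivstellensatz), I can drop the nonnegative $\hat\psi_{M,\mathbf{d}}$ term on the left-hand side to obtain
\begin{align*}
\frac{1}{M}\sum_{i,t}\|\mfa_t^i-\hat\pi_{M,\mathbf{d}}(\mfx_t^i)\|_2^2 \;\le\; \frac{1}{M}\sum_{i,t}\hat\psi_{M,\mathbf{d}}(\mfx_t^i,\mfa_t^i).
\end{align*}

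Second, I would control the right-hand side using Theorem~\ref{thm: MAIN-CONSISTENCY-THEOREM}. The coefficient bound \eqref{eq:theta_psi_bounded} together with Assumptions~\ref{ass:transition_kernel}--\ref{ass:obj_func} constrains $\hat\psi_{M,\mathbf{d}}$ to a uniformly Lipschitz, hence equicontinuous, family on $\mK$, so Lemma~\ref{lem: stochastic-consistency-for-infinite-dim} implies that the empirical average converges, uniformly over the family, to the expectation $\langle\hat\psi_{M,\mathbf{d}},\hat{\mu}^{\pitrue,N}\rangle$ (this is the same argument used in the proof of Theorem~\ref{thm: MAIN-CONSISTENCY-THEOREM}). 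Theorem~\ref{thm: MAIN-CONSISTENCY-THEOREM} then guarantees that this expectation vanishes in the triple limit, and therefore the right-hand side --- and in view of $\mfa_t^i=\pitrue(\mfx_t^i)$ the empirical squared error $\frac{1}{M}\sum_{i,t}\|\pitrue(\mfx_t^i)-\hat\pi_{M,\mathbf{d}}(\mfx_t^i)\|_2^2$ --- tends to zero almost surely.

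Finally, the empirical statement must be transferred to the expectation against $\proj\hat{\mu}^{\pitrue,N}$. I would do this via a second application of Lemma~\ref{lem: stochastic-consistency-for-infinite-dim}, now to the parametric class $\mathcal{G}=\{x\mapsto\|\pitrue(x)-\pi(x;\theta)\|_2^2:\theta\in\Theta\}$. Uniform-in-$\theta$ convergence of $\frac{1}{M}\sum_{i,t}\|\pitrue(\mfx_t^i)-\pi(\mfx_t^i;\theta)\|_2^2$ to $\langle\|\pitrue-\pi(\cdot;\theta)\|_2^2,\proj\hat{\mu}^{\pitrue,N}\rangle$, evaluated at the data-dependent minimizer $\theta=\hat\theta_{\pi,M,\mathbf{d}}$, then yields the claim. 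The hard part is verifying the equicontinuity hypothesis of Lemma~\ref{lem: stochastic-consistency-for-infinite-dim} for $\mathcal{G}$: while $\Theta$ is compact, $A$ is bounded by Assumption~\ref{ass:compact_support}, and $\pi(x;\cdot)$ is continuous, strict equicontinuity of $\mathcal{G}$ requires the somewhat stronger condition that $\pi$ be jointly continuous in $(x,\theta)$ and that $\pitrue$ itself be continuous --- a natural assumption for smooth parameterizations such as neural networks, but slightly stronger than what is explicitly stated in the corollary's hypotheses.
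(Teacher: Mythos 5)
Your overall strategy coincides with the paper's: use $\theta_{\pitrue}$ as a test point in \eqref{eq:policy_approx_reconstruction}, drop the $\hat\psi_{M,\mathbf{d}}$ term at the candidate policy via a lower bound on $\hat\psi_{M,\mathbf{d}}$ over $\mK$, bound the surviving term $\frac{1}{M}\sum_{i,t}\hat\psi_{M,\mathbf{d}}(\mfx_t^i,\mfa_t^i)$ through Theorem~\ref{thm: MAIN-CONSISTENCY-THEOREM}, and transfer the empirical error to $\proj\hat\mu^{\pitrue,N}$ by a uniform law of large numbers over the policy class. However, there is one genuine gap: your claim that $\hat\psi_{M,\mathbf{d}}\ge 0$ on all of $\mK$ ``follows from constraint \eqref{eq:theta_psi_constraint}'' conflates two different objects. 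The SOS constraint makes the \emph{polynomial} $\theta_\psi^T\phi$ nonnegative on $\mK$, but the function used in \eqref{eq:policy_approx_reconstruction} and in Theorem~\ref{thm: MAIN-CONSISTENCY-THEOREM} is $\hat\psi_{M,\mathbf{d}}=\theta_{\ell,M}^{\mathbf{d},T}\varphi+(\alpha q^*-\proj^*)(\theta_{V,M}^{\mathbf{d},T}r)$, built from the \emph{exact} features $\varphi$ and the exact operator $q^*$; it differs from $\theta_\psi^T\phi$ by the basis-approximation error and is therefore not guaranteed to be nonnegative. This lower bound is load-bearing: the term you drop is evaluated at $(\mfx_t^i,\pi(\mfx_t^i;\theta_\pi))$, i.e.\ at actions \emph{off} the data, so if $\hat\psi_{M,\mathbf{d}}$ were substantially negative somewhere in $\mK$ the minimizer of \eqref{eq:policy_approx_reconstruction} could exploit that region and the inequality you write would fail. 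The paper closes this hole with the perturbation device of Lemma~\ref{lem: Approximating phi by polynomials}: there exists $a_{d_\psi}$ with $\|a_{d_\psi}\|_1\le\epsilon/(4N\|\varphi\|_\infty)$ such that the shifted function is feasible for \eqref{eq:bellman_inequality_constraint}, whence $\hat\psi_{M,\mathbf{d}}(x,a)\ge-\epsilon/(4N)$ on $\mK$; the resulting $-\epsilon/4$ slack is then absorbed into the final bound. You need this (or an equivalent bridge between the polynomial and non-polynomial versions) for the argument to go through.

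On your last step: the concern you raise about equicontinuity is real but self-inflicted. The paper does not invoke Lemma~\ref{lem: stochastic-consistency-for-infinite-dim} for the class $\{\|\pi(\cdot;\theta)-\pitrue(\cdot)\|_2^2:\theta\in\Theta\}$; it instead uses the Glivenko--Cantelli theorem for parametric classes \citep[E.g.~19.8]{van1998asymptotic}, which only requires continuity in $\theta$ for each fixed $x$, compactness of $\Theta$, and the integrable envelope $\diam(A)^2$ --- exactly the hypotheses stated in the corollary. Switching to that tool removes the need for joint continuity of $\pi$ in $(x,\theta)$ and continuity of $\pitrue$ that your route would require.
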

\begin{pf}
    Since $\pi(\cdot;\theta_\pi)$ is continuous with respect to $\theta_\pi$ and $\theta_\pi$ belongs to a compact set, $\|\pi(x;\theta_\pi)-\pitrue(x)\|_2^2$ is continuous with respect to $\theta_\pi$ for each $x$. 
    Furthermore, since $\|\pi(\cdot;\theta_\pi)-\pitrue(\cdot)\|_2^2\leq\diam(A)^2$, the function class $\{\|\pi(\cdot;\theta_\pi)-\pitrue(\cdot)\|_2^2 \mid \theta_\pi\in \Theta\}$ is Glivenko-Cantelli \citep[E.g.~19.8, p.~272]{van1998asymptotic}. Thus, there exists $M_2'(\epsilon)$, such that $\forall M\geq M_2'(\epsilon)$ and $\pi\in\{\pi(\cdot;\theta_\pi)|\theta_\pi\in \Theta\}$, the following inequality holds
    \begin{align}
       & |\langle (\|\pi(\cdot)\!-\!\pitrue(\cdot)\|_2^2, \proj\hat\mu^{\pitrue,N} \rangle\! -\! \frac{1}{M}\sum_{i=1}^M\sum_{t=1}^N \|\pi(\mfx_{t}^i)\!-\!\pitrue(\mfx_{t}^i)\|_2^2|\nonumber\\
       & \quad \leq \epsilon/4,\;  \mbox{a.s.}
        \label{eq:pi_ulln}
    \end{align}

    On the other hand, let $(\theta_{\ell, M}^{\mathbf{d}}, \theta_{V, M}^{\mathbf{d}})$ be optimal to \eqref{eq:IOC_approx2}. 
    As shown in Theorem~\ref{thm: MAIN-CONSISTENCY-THEOREM}, for any $\epsilon >0$, there exists $M_3'(\epsilon), d_V'(\epsilon, M), d_{\psi,1}'(\epsilon, M, d_V)$, such that for all $M\geq M_3'(\epsilon), d_V\geq d_V'(\epsilon, M), d_\psi \geq d_{\psi,1}'(\epsilon, M, d_V)$, it holds $\langle\hat\psi_{M,\mathbf{d}}, \hat{\mu}^{\pitrue,N} \rangle \leq \epsilon/4$ a.s, 
    where $\hat\psi_{M,\mathbf{d}}:=\theta_{\ell, M}^{\mathbf{d},T}\varphi\!+\!(\alpha q^*\!-\!\proj^*)(\theta_{V, M}^{\mathbf{d},T}r)$.

    And applying Lemma~\ref{lem: Approximating phi by polynomials}, there also exists $d_{\psi,2}'(\epsilon, M, d_V)$ such that for all $d_\psi \geq d_{\psi,2}'(\epsilon, M, d_V)$, $\tilde{\psi}_{M,\mathbf{d}}:=(\theta_{\ell, M}^{\mathbf{d}}+a_{d_\psi})^T\varphi\!+\!(\alpha q^*\!-\!\proj^*)(\theta_{V, M}^{\mathbf{d},T}r)$ is feasible for \eqref{eq:bellman_inequality_constraint} with $\|a_{d_\psi}\|_1\leq \epsilon/( 4N\| \varphi \|_\infty)$.
    Thus we have $ \hat\psi_{M,\mathbf{d}} (x,a)\geq \tilde{\psi}_{M,\mathbf{d}}(x,a) - \|a_{d_\psi}\|_1\| \varphi \|_\infty \geq -\epsilon/(4N), \forall (x,a)\in\mK$. 
    Therefore, since $\hat\pi_{M,\mathbf{d}}$ is optimal to \eqref{eq:policy_approx_reconstruction}, and in view of \eqref{eq:psi_ulln}, it follows that
    \begin{align*}
        &\frac{1}{M}\sum_{i=1}^M\sum_{t=1}^N \left(-\epsilon/(4N) + \|\hat\pi_{M,\mathbf{d}}(\mfx_{t}^i)-\pitrue(\mfx_{t}^i)\|_2^2\right)  \\
        &\leq \frac{1}{M}\sum_{i=1}^M\sum_{t=1}^N \hat\psi_{M,\mathbf{d}}(\mfx_{t}^i, \hat\pi_{M,\mathbf{d}}(\mfx_{t}^i)) + \|\hat\pi_{M,\mathbf{d}}(\mfx_{t}^i)-\pitrue(\mfx_{t}^i)\|_2^2\\
        &\leq \frac{1}{M}\sum_{i=1}^M\sum_{t=1}^N \hat\psi_{M,\mathbf{d}}(\mfx_{t}^i, \pitrue(\mfx_{t}^i)) + \|\pitrue(\mfx_{t}^i)-\pitrue(\mfx_{t}^i)\|_2^2 \\
        &=\! \frac{1}{M}\sum_{i=1}^M\!\!\sum_{t=1}^N\hat\psi_{M,\mathbf{d}}(\mfx_{t}^i, \mfa_{t}^i)\!\leq\! \underbrace{\langle \hat\psi_{M,\mathbf{d}}, \hat{\mu}^{\pitrue,N} \rangle}_{\le \epsilon/4}\! +\! \epsilon/4 \!\leq\!  \epsilon/2,\;\mbox{a.s.}.
    \end{align*}
    Thus, it follows from \eqref{eq:pi_ulln} that $\forall M\!\ge\!\max\{M_1'(\epsilon),\!M_2'(\epsilon),\!$ $M_3'(\epsilon)\}$, $d_V\geq d_V'(\epsilon, M), d_\psi \geq \max\{d_{\psi,1}'(\epsilon, M, d_V),$ $d_{\psi,2}'(\epsilon, M, d_V)\}$, 
    it holds that $\langle \|\hat\pi_{M,\mathbf{d}}(\cdot)-\pitrue(\cdot)\|_2^2, \proj\hat\mu^{\pitrue,N} \rangle \leq \frac{1}{M}\sum_{i=1}^M\sum_{t=1}^N $ 
    $\|\hat\pi_{M,\mathbf{d}}(\mfx_{t}^i)-\pitrue(\mfx_{t}^i)\|_2^2 + \epsilon/4 \leq \epsilon$ a.s.
    And hence the statement follows.

\end{pf}

\section{On pratical implementation }\label{sec:practical_implementation}

As demonstrated in Theorem \ref{thm: MAIN-CONSISTENCY-THEOREM}, the IOC problem can be solved numerically by solving \eqref{eq:IOC_approx2}, where $\hat{\psi} = \theta_\psi^T\phi\in \mathscr{Q}(g_{1:n_x+n_a})$. 
This constraint corresponds to a Weighted Sum-of-Squares (WSOS) polynomial condition.
Consequently, we solve such problem by following the pipeline proposed in \citep{Papp2019sum}, which treats the problem as a non-symmetric conic optimization problem. In particular, it constructs a logarithmically homogeneous self-concordant barrier (LHSCB) function, and applies a primal-dual interior point method. 
This framework is significantly more efficient and numerically stable than classical semidefinite-programming-based approaches.
Next, we will explain the procedure briefly for self-containment.

As mentioned in Sec.~\ref{sec:finite-dim-ioc}, we represent the polynomials by linear combination of Lagrange interpolation basis. This turns the WSOS polynomial constraints to conic constraints on polynomial coefficients.  
Next, we reformulate \eqref{eq:IOC_approx2} to the following form
\begin{equation}\label{eq:dual-standform}
    \begin{aligned}
    \max_{\theta_\psi, \theta_\ell, \theta_V} &(-\Xi_\psi^Th)^T\begin{bmatrix}\theta_\ell\\\theta_V\end{bmatrix},\\
    \text{s.t. }& \begin{bmatrix}d^T(-\Xi_\psi)\\ -\Xi_\psi  \end{bmatrix}\begin{bmatrix}\theta_\ell\\\theta_V\end{bmatrix}
    + \begin{bmatrix} s_1\\\theta_\psi\end{bmatrix}
    = \begin{bmatrix}-1\\0\end{bmatrix},\\
    & \begin{bmatrix}s_1\\\theta_\psi\end{bmatrix} \in\mR_+\times\mathcal{K},
\end{aligned}
\end{equation}
where $\Xi_\psi$ is defined in \eqref{eq:psi_ell_V_equality_constraint}, $\mathcal{K} := \left\{\theta\in\mR^{D_\psi} \mid \theta^T\phi\in \right.$ $\left.\mathscr{Q}(g_{1:n_x+n_a})\right\}$ is the conic constraints on the polynomial coefficients, and $s_1\in\mR_+$ is a slack variable. 
Moreover, as also mentioned in Sec.~\ref{sec:finite-dim-ioc}, in practice, we can choose the norm bounds $\beta_\psi^\prime$, $\beta_\ell^\prime$, $\beta_V^\prime$ in \eqref{eq:IOC_approx2} to be arbitrarily large to include the ``true" cost function $\bar{\ell}$ and the ``true'' value function $\bar{V}$ as an interior point of the norm constraints \eqref{eq:theta_psi_bounded}, \eqref{eq:theta_V_ell_bounded}.
This means that any ``reasonable" estimator should make the norm bound constraints \eqref{eq:theta_psi_bounded} and \eqref{eq:theta_V_ell_bounded} inactive, otherwise it means that the orders of the polynomials or the amount of the data are just not sufficient.
Therefore, in practice, we can safely ignore \eqref{eq:theta_psi_bounded} and \eqref{eq:theta_V_ell_bounded} and if the numerical solver fails to solve \eqref{eq:dual-standform}, it implies that we need to add more data or increase the orders of the polynomials.

Next, we view \eqref{eq:dual-standform} as the dual problem of a non-symmetric conic optimization problem, which takes the form
\begin{equation}
\begin{aligned}
    \min_{y\in \mR^{D_\psi+1}} &-y_1,\\
    \mathrm{s.t. }&\begin{bmatrix}\Xi_\psi^Td &\Xi_\psi^T  \end{bmatrix}y =\Xi_\psi^Th,\\
    & y\in \mR_+\times\mathcal{K}^*,
\end{aligned}
\label{eq:final_primal_problem}
\end{equation}
where $\mathcal{K}^*$ is the dual cone of $\mathcal{K}$.
To handle the cone $\mathcal{K}^*$, we adopt the LHSCB function $F:(\mathcal{K}^*)^\circ\rightarrow\mR$ given by \citep[Sec.~6]{Papp2019sum}, where $(\mathcal{K}^*)^\circ$ is the interior of $\mathcal{K}^*$. 
More specifically, let $q$ and $p$ denote the polynomial bases of degree $2d_\psi$ and $(d_\psi-1)$, respectively. The barrier function $F$ shall take the form
\begin{align*}
    F(y) = -\sum_{i=1}^{n_x+n_a}\mathrm{ln}\left(\mathrm{det}\left(\Lambda_i(y)\right)\right),
\end{align*}
where $\Lambda_i$ is the linear operator uniquely defined by $\Lambda_i(q)=g_i pp^T$.
Note that the operator $\Lambda_i$ depends explicitly on the choice of the polynomial basis, and, as mentioned in Remark~\ref{remark:using-interpolation}, we employ the Lagrange interpolation basis rather than the monomial basis since the latter often yields ill-conditioned $\Lambda_i(y)$ and degrades numerical performance. 
Moreover, we also use the approximated Fekete points \citep{Sommariva2009FeketePoints} as the Lagrange interpolation nodes to avoid making the resulting polynomial sensitive to the interpolation values.

Finally, we solve \eqref{eq:final_primal_problem} with the primal-dual interior-point method proposed in \citep{Skajaa2015pdinterior}. All subsequent numerical experiments are conducted based on this implementation.

\section{Numerical experiments}\label{sec:numerical-expe}
In this section, we evaluate the performance of the proposed method on IOC problems on three typical systems. In particular,
\begin{enumerate}
    \item Linear quadratic regulator (LQR), where the optimal policy and value function admit analytically polynomial representations. 
    \item Temperature control system which is nonlinear and the dynamics is characterized by polynomial. 
    \item Inverted pendulum system which is nonlinear and the dynamics involves sinusoidal terms. 
\end{enumerate}
The experimental results for these three cases are presented  below.

\subsection{Linear quadratic regular}
Consider the linear system
\begin{align*}
    &\mfx_{t+1} =  A\mfx_t + B\mfa_t + \mfw_t,\\
    &A = \left[\begin{matrix}
        1 &0.1\\
        0 &1
    \end{matrix}\right], B = \left[\begin{matrix}
        0.005\\
        0.1
    \end{matrix}\right], \mfw_t\sim \mathcal{N}_{[-1,1]^2}(0,0.01^2I),
\end{align*}
where $I$ is identity matrix. 
The proposed method is evaluated on 400 systems, all of them are with the decay constant $\alpha=0.99$, and then with the cost function that takes the forms
\begin{align*}
    &\ell(x, a) = x^TQx + a^TRa ,\\
    &Q = \text{diag}(q_1, q_2), R = r\\
    &(q_1, q_2, r) = (\bar{q}_1, \bar{q}_2, \bar{r})/\|(\bar{q}_1, \bar{q}_2, \bar{r})\|_2,
\end{align*}
where $(\bar{q}_1, \bar{q}_2, \bar{r})$ is sampled from $\mathcal{U}(0,1]^3$.
Moreover, we set the polynomial degrees to $d_\psi=3$ and $d_V=2$, which is higher than needed to estimate the cost function. The reason for doing this is to assess the robustness of the proposed method under an over-complete basis representation.
More specifically, a total of 128 systems were random initialized by standard Gauss distribution and simulated over 4 steps to collect data.
As shown in Fig.\ref{fig:lqr-hist}, the proposed method accurately recovers the coefficients of cost function in the LQR case, with the estimation errors concentrated near zero. 

\begin{figure}[!htpb]
    \includegraphics[width=\linewidth]{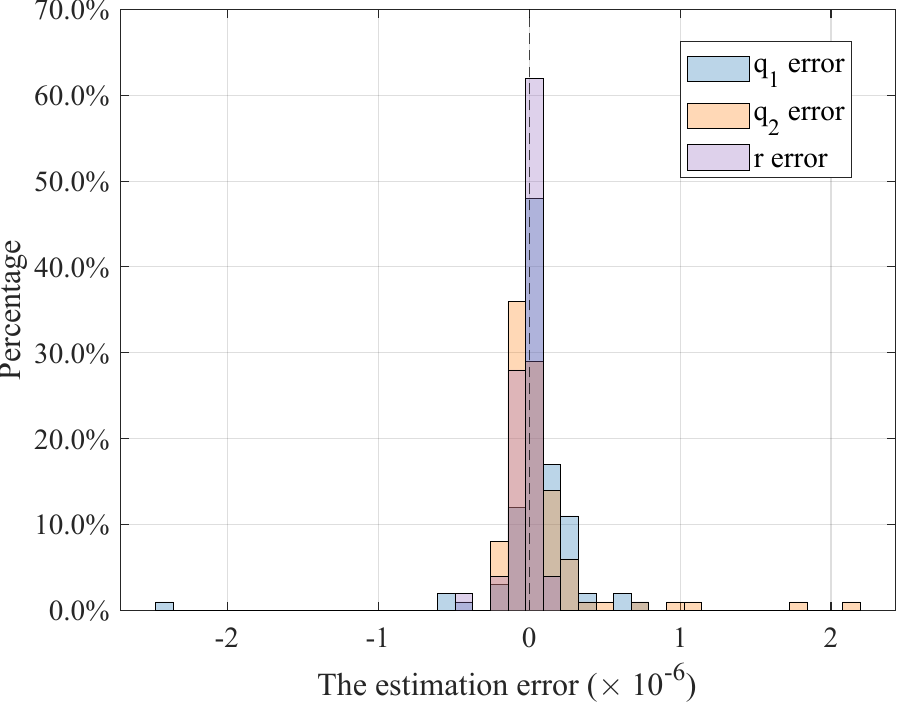}
    \caption{Histogram of Estimation Errors for the LQR Case. Only the central 99.5\% of the samples are displayed. The experiment uses redundant polynomial degrees ($d_\psi=3, d_V=2$) to access the robustness of the proposed method under an overcomplete basis representation.}
    \label{fig:lqr-hist}
\end{figure}

\subsection{Temperature control system}
Consider the temperature control system
\begin{align*}
    &\mfT_{t+1} = \mfT_t + 0.5\mfu - \mathfrak a(\mfT_t-T_{\text{env}}) - \mathfrak b(\mfT_t^4 - T_{\text{env}}^4) + \mfw_t,
\end{align*}
where $\mathfrak a = 5\times10^{-4}, \mathfrak b = 2.268\times10^{-8}, T_{\text{env}} = 290, \mfw_t \in \mathcal{N}_{[-200,200]}(0, 2^2)$.
In particular, the radiative heat dissipation term $\mathfrak b(\mfT_t^4 - T_{\text{env}}^4)$ introduces nonlinearities to the system dynamics. Similar to the LQR case, we construct 100 cost functions with randomly chosen coefficients. In particular,
\begin{align*}
    &\ell(T, a) = q(T/200 - 3.25)^2 + r(a/500)^2,\\
    &(q, r) = (\bar q, \bar r)/\|(\bar{q}, \bar{r})\|_2,
\end{align*}
where $(\bar q, \bar r)$ is sampled from $\mathcal{U}(0,1]^2$.
Moreover, to the best of our knowledge, such nonlinear forward optimal control problems do not have explicit solution for the optimal control policy. Therefore, to numerically generate an expert demonstration dataset,  we employ the Model Predicitve Control (MPC) algorithm with horizen length 64, and minimize the cost function with decay constant $\alpha=0.9$. Using this approach,
for each cost function, the expert demonstration dataset is generated by simulating 32 temperature control systems over 2 time steps. 

We estimated the coefficients of these cost functions under different polynomial approximation degrees and evaluated the estimation accuracy by comparing the normalized estimated coefficients with the ground-truth values. 
As shown in Fig.\ref{fig:tem-hist}, the proposed method performs well for this nonlinear case. Furthermore, higher approximation degrees lead to smaller errors, which is consistent with theoretical expectations.
\begin{figure}[!htpb]
    \includegraphics[width=\linewidth]{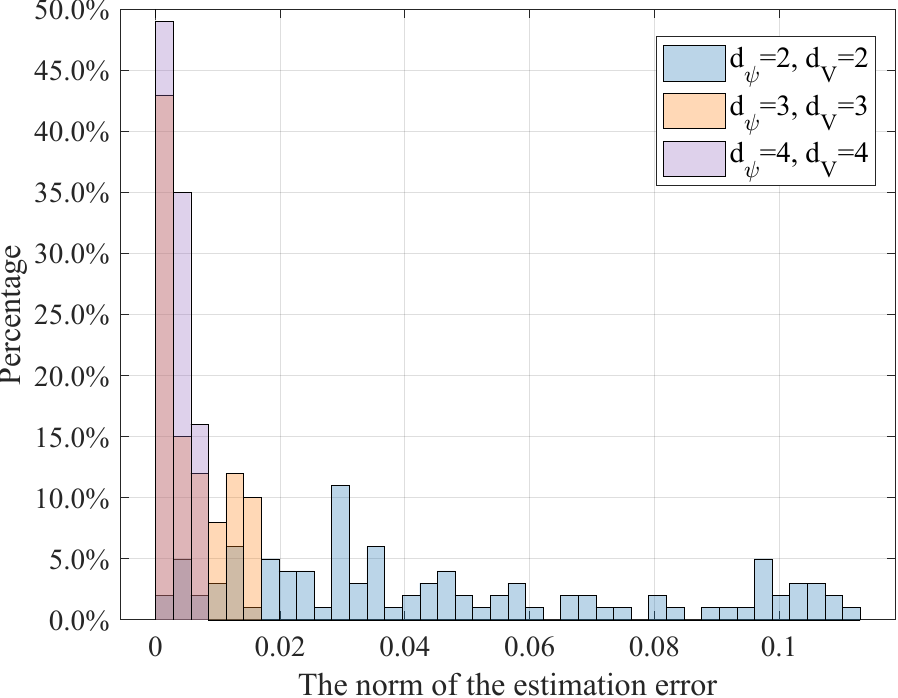}
    \caption{Estimate error histogram for the Temperature control system under different approximation degrees. Only the central 99.5\% of data points are plotted. Estimation accuracy imporves steadily as the approximation degree increases from 2 to 4, with narrower error distributions for higher-order approximations.
    }
    \label{fig:tem-hist}
\end{figure}

\subsection{Inverted pendulum system}
Consider the inverted pendulum system
\begin{equation}
    \begin{aligned}
        &p_{t+1} = p_t + 0.01v_t,\\
        &v_{t+1} = 0.999v_t + 0.01\text{sin}(p_t) + \text{cos}(p_t)u_t.
    \end{aligned}\label{eq:inverted-pendulum-dynamics}
\end{equation}
Notably, the above dynamics is deterministic and only weakly continuous. Hence it does not satisfy our Assumption \ref{ass:transition_kernel} that the transition kernel is Lipschitz continuous in total variation distance. 
Yet we choose to do the experiment under this set-up to show that the algorithm is still able to achieve a good performance despite the fact that the stand-alone assumptions are not fulfilled completely.

Moreover, analogous to the temperature control problem, this problem does not, to the best of our knowledge, have an explicit solution for the optimal control policy.
To numerically generate the expert dataset, we employ an MPC with a 64-step time horizon length and decay constant $\alpha=0.9$ to minimize the running cost function 
\begin{align*}
    \ell(p_t, v_t, u_t) = 100p_t^2 + 10v_t^2 + u_t^2.
\end{align*}

Since the amplitude of the coefficient vectors does not influence the optimal control policy, both the ground truth and estimated coefficient vectors are normalized. Then the distances between these normalized coefficient vectors are computed to quantify the relative estimate errors. As shown in Fig.~\ref{fig:ivp-data-size}, the estimated errors decrease sharply as the data amount increases, and the error stabilizes around $100$ data samples. Further data additions yield no observable improvements, as the inherent precision limit of the approximation itself (rather than data scarcity) becomes the dominant factor restricting error reduction. 
\begin{figure}[!htpb]
    \includegraphics[width=\linewidth]{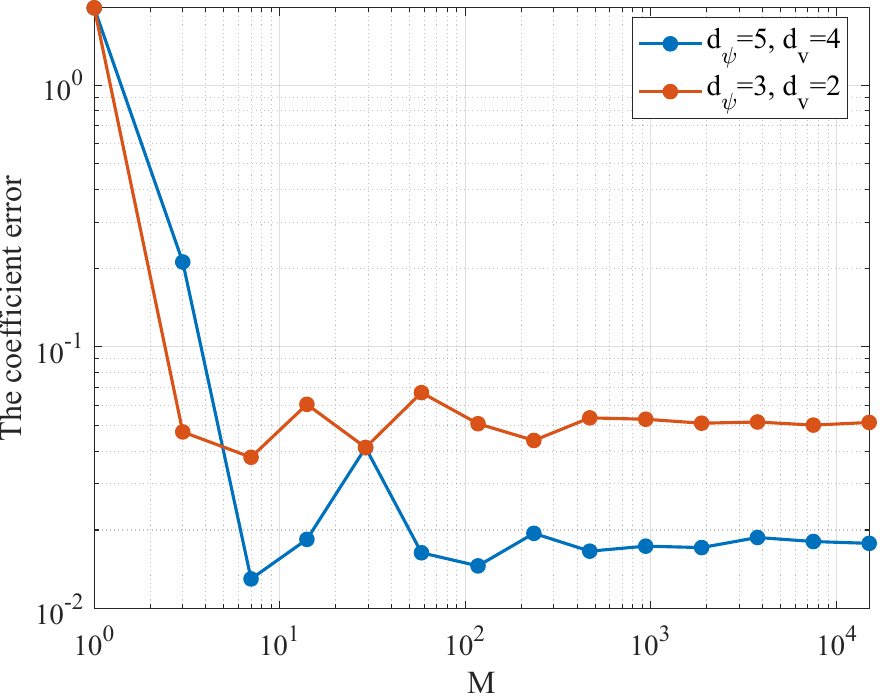}
    \caption{Estimated accuracy versus data volume $M$. The full dataset contains 16384 one-step trajectories. For each value of $M$, the training set is obtained by randomly sampling $M$ trajectories from the full dataset. With approximately $100$ data samples, the estimated errors reach around $5\%$ for $d_\psi=3, d_\psi=2$ and around $2\%$ for $d_\psi=5, d_\psi=4$, after that they level off.}
    \label{fig:ivp-data-size}
\end{figure}

Fig.~\ref{fig:ivp-degrees} shows further results for different approximation degrees.
In these experiments, we also look at the control policy estimation error.
To this end, the expert control policy is recovered using the algorithm explained in Section~\ref{sec:policy-recovery}, where
a four-layer neural network with two hidden layers of sizes 128 and 64 is used to fit the control policy. We generate a dataset of $32768$ trajectories, with random initial value given by $\mathcal{N}_{[-1, 1]^2}(0, \diag(0.314^2,0))$, envolving $4$ time steps.
The dataset is randomly divided into training and testing subsets with a ratio of 4:1. 
The recovery accuracy for the policy is evaluated on the test set using the Symmetric Mean Absolute Percentage Error (SMAPE) between the recovered and the ground truth policies, which is defined as
\begin{align*}
    e_{\mathrm{SMAPE}} = \frac{2}{M_{\text{test}}} \sum_{i=1}^{M_{\text{test}}}\frac{\left|a_{i}-g(x_{i};\theta_\pi)\right|}{\left|a_{i}\right|+\left|g(x_{i};\theta_\pi)\right|}.
\end{align*}
As illustrated in Fig.~\ref{fig:ivp-degrees}, as the approximation degree increases, both the coefficient estimation error and the policy recovery error decrease, which is consistent with our expectation of enhanced accuracy for higher-order approximations.
\begin{figure}[!htpb]
    \includegraphics[width=\linewidth]{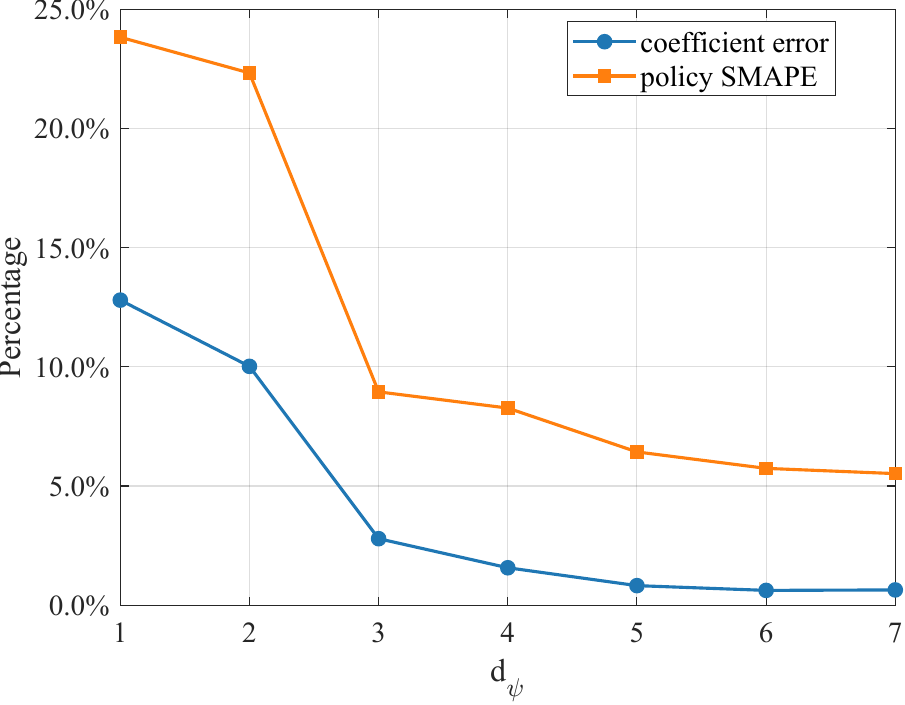}
    \caption{Estimated errors of the cost function coefficient vectors and SMAPE of the recovered control policy across varying approximation degrees. All experiments utilize around $131$k data samples. For each $d_\psi$, best result across different $d_V$ values is reported.}
    \label{fig:ivp-degrees}
\end{figure}

Finally, we also present an additional experiment that shows the robustness and generalizability of the proposed method compared to ``behaviour cloning''. In this setting, the expert data is generated on a new system whose parameters are perturbed, i.e.,
\begin{align*}
    &\bm p_{t+1} = \bm p_t + 0.01\bm v_t + \bm w_{1,t},\\
    &\bm v_{t+1} = 0.999\bm v_t + 0.0105\text{sin}(\bm p_t) + 1.05\text{cos}(\bm p_t)u_t +\bm w_{2,t},
\end{align*}
where $\mfw_{i,t}\sim \mathcal{N}_{[-0.4, 0.4]^2}(0,0.004^2)$ are independent process noise.
In particular, we simulate 16 perturbed systems over 256 steps, and then estimate the coefficients in the cost function by assuming an unchanged system dynamics \eqref{eq:inverted-pendulum-dynamics}. The initial values are randomly sampled according to $(p_0, v_0)\sim \mathcal U[-2.0, 2.0]^2$, and the test system is initialized as $(\bm p_0, v_0) = (3.0, 0.0)$. This simulate the case of out-of-distribution data samples mentioned in Sec. \ref{sec:problem_formulation}.
As shown in Fig.~\ref{fig:ivp-comp}, the proposed method shows a better performance than behavior cloning method.
\begin{figure}[!htpb]
    \includegraphics[width=\linewidth]{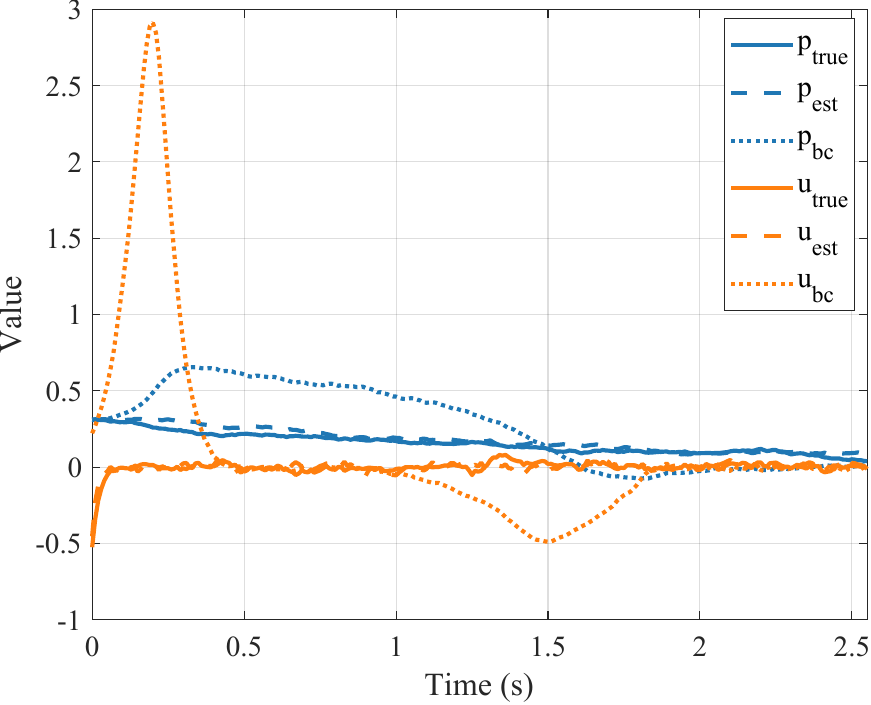}
    \caption{The angle and control trajectories of the test inverted pendulum system. Trajectories is generated by three different controller: (1) MPC controller using the ground truth coefficients (labeled by ``true"); (2) MPC controller using the estimated coefficients (labeled by "est"); (3) Behavior cloning controller (three-layes neural network with 128 hidden units) trained on the same dataset (labeled by ``bc"). }
    \label{fig:ivp-comp}
\end{figure}

\section{Conclusions}\label{sec:conclusions}
In this paper, we investigate the IOC problem for discrete-time, nonlinear stochastic systems with infinite-horizon discounted cost. 
In particular, we propose a polynomial-approximation-based approach that reformulates the IOC problem into a sum-of-square optimization problem. This convex formulation avoids local minima and maintains theoretical interpretability.

The proposed estimator is shown to be asymptotically and statistically consistent: as the polynomial approximation order and the number of demonstration trajectories increase, the expert policy becomes nearly optimal under the estimated cost function, and the corresponding optimality gap asymptotically approaches zero.
Numerical experiments on three typical optimal control systems validate the theoretical consistency and further demonstrate the estimation accuracy and generalization capability of the proposed method.

Future work will extend the proposed framework to higher-dimensional systems, addressing the scalability challenges of polynomial and SOS approximations.
Another promising direction is to handle information-limited settings, including partially observed states, unknown dynamics, or unmeasured control inputs.

\bibliographystyle{plain}
\bibliography{ref}

\begin{thebibliography}{10}

\bibitem{abbeel2004apprenticeship}
Pieter Abbeel and Andrew~Y Ng.
\newblock Apprenticeship learning via inverse reinforcement learning.
\newblock In {\em Proceedings of the twenty-first international conference on
  Machine learning}, page~1, 2004.

\bibitem{Ashwood2022Dynamic}
Zoe Ashwood, Aditi Jha, and Jonathan~W Pillow.
\newblock Dynamic inverse reinforcement learning for characterizing animal
  behavior.
\newblock In {\em Advances in Neural Information Processing Systems},
  volume~35, pages 29663--29676, 2022.

\bibitem{codevilla2019exploring}
Felipe Codevilla, Eder Santana, Antonio~M L{\'o}pez, and Adrien Gaidon.
\newblock Exploring the limitations of behavior cloning for autonomous driving.
\newblock In {\em Proceedings of the IEEE/CVF international conference on
  computer vision}, pages 9329--9338, 2019.

\bibitem{dey2023inverse}
Sourav Dey, Thibault Marzullo, and Gregor Henze.
\newblock Inverse reinforcement learning control for building energy
  management.
\newblock {\em Energy and Buildings}, 286:112941, 2023.

\bibitem{dvijotham2010inverse}
Krishnamurthy Dvijotham and Emanuel Todorov.
\newblock Inverse optimal control with linearly-solvable mdps.
\newblock In {\em Proceedings of the 27th International conference on machine
  learning (ICML-10)}, pages 335--342, 2010.

\bibitem{fernandez2025estimating}
Victor~Nan Fernandez-Ayala, Shankar~A Deka, and Dimos~V Dimarogonas.
\newblock Estimating unknown dynamics and cost as a bilinear system with
  koopman-based inverse optimal control.
\newblock {\em arXiv preprint arXiv:2501.18318}, 2025.

\bibitem{pmlr-v48-finn16}
Chelsea Finn, Sergey Levine, and Pieter Abbeel.
\newblock Guided cost learning: Deep inverse optimal control via policy
  optimization.
\newblock In Maria~Florina Balcan and Kilian~Q. Weinberger, editors, {\em
  Proceedings of The 33rd International Conference on Machine Learning},
  volume~48 of {\em Proceedings of Machine Learning Research}, pages 49--58,
  New York, New York, USA, 20--22 Jun 2016. PMLR.

\bibitem{fu2017learning}
Justin Fu, Katie Luo, and Sergey Levine.
\newblock Learning robust rewards with adversarial inverse reinforcement
  learning.
\newblock In {\em International Conference on Learning Representations}, 2018.

\bibitem{garrabe2025convex}
Emiland Garrabe, Hozefa Jesawada, Carmen Del~Vecchio, and Giovanni Russo.
\newblock On convex data-driven inverse optimal control for nonlinear,
  non-stationary and stochastic systems.
\newblock {\em Automatica}, 173:112015, 2025.

\bibitem{Guo2021Deep}
Hongye Guo, Qixin Chen, Qing Xia, and Chongqing Kang.
\newblock Deep inverse reinforcement learning for objective function
  identification in bidding models.
\newblock {\em IEEE Transactions on Power Systems}, 36(6):5684--5696, 2021.

\bibitem{guo2023imitation}
Taosha Guo, Abed~AlRahman Al~Makdah, Vishaal Krishnan, and Fabio Pasqualetti.
\newblock Imitation and transfer learning for lqg control.
\newblock {\em IEEE Control Systems Letters}, 7:2149--2154, 2023.

\bibitem{hernandez2012discrete}
On{\'e}simo Hern{\'a}ndez-Lerma and Jean~Bernard Lasserre.
\newblock {\em Discrete-time Markov control processes: basic optimality
  criteria}.
\newblock Springer Science \& Business Media, 1996.

\bibitem{Ho2016GAIL}
Jonathan Ho and Stefano Ermon.
\newblock Generative adversarial imitation learning.
\newblock In {\em Advances in Neural Information Processing Systems},
  volume~29, 2016.

\bibitem{Jean2019Injectivity}
Frédéric Jean and Sofya Maslovskaya.
\newblock Injectivity of the inverse optimal control problem for control-affine
  systems.
\newblock In {\em 2019 IEEE 58th Conference on Decision and Control (CDC)},
  pages 511--516, 2019.

\bibitem{kallenberg1997foundations}
Olav Kallenberg.
\newblock {\em Foundations of modern probability}.
\newblock Springer, 1997.

\bibitem{kreyszig1991introductory}
Erwin Kreyszig.
\newblock {\em Introductory functional analysis with applications}.
\newblock John Wiley \& Sons, 1991.

\bibitem{laurent2009sums}
Monique Laurent.
\newblock Sums of squares, moment matrices and optimization over polynomials.
\newblock In {\em Emerging applications of algebraic geometry}, pages 157--270.
  Springer, 2009.

\bibitem{li2018convex}
Yibei Li, Han Zhang, Yu~Yao, and Xiaoming Hu.
\newblock A convex optimization approach to inverse optimal control.
\newblock In {\em 2018 37th Chinese Control Conference (CCC)}, pages 257--262.
  IEEE, 2018.

\bibitem{lian2022inverse}
Bosen Lian, Wenqian Xue, Frank~L Lewis, and Tianyou Chai.
\newblock Inverse reinforcement learning for multi-player noncooperative
  apprentice games.
\newblock {\em Automatica}, 145:110524, 2022.

\bibitem{Liang2023DataDriven}
Zihao Liang, Wenjian Hao, and Shaoshuai Mou.
\newblock A data-driven approach for inverse optimal control.
\newblock In {\em 2023 62nd IEEE Conference on Decision and Control (CDC)},
  pages 3632--3637, 2023.

\bibitem{likmeta2021dealing}
Amarildo Likmeta, Alberto~Maria Metelli, Giorgia Ramponi, Andrea Tirinzoni,
  Matteo Giuliani, and Marcello Restelli.
\newblock Dealing with multiple experts and non-stationarity in inverse
  reinforcement learning: an application to real-life problems.
\newblock {\em Machine Learning}, 110(9):2541--2576, 2021.

\bibitem{mcshane1934extension}
Edward~James McShane.
\newblock Extension of range of functions.
\newblock 1934.

\bibitem{Mehr2023Maximum}
Negar Mehr, Mingyu Wang, Maulik Bhatt, and Mac Schwager.
\newblock Maximum-entropy multi-agent dynamic games: Forward and inverse
  solutions.
\newblock {\em IEEE Transactions on Robotics}, 39(3):1801--1815, 2023.

\bibitem{Menner2018Convex}
Marcel Menner and Melanie~N. Zeilinger.
\newblock Convex formulations and algebraic solutions for linear quadratic
  inverse optimal control problems.
\newblock In {\em 2018 European Control Conference (ECC)}, pages 2107--2112,
  2018.

\bibitem{mombaur2010human}
Katja Mombaur, Anh Truong, and Jean-Paul Laumond.
\newblock From human to humanoid locomotion -- an inverse optimal control
  approach.
\newblock {\em Autonomous robots}, 28(3):369--383, 2010.

\bibitem{Papp2019sum}
Dávid Papp and Sercan Yildiz.
\newblock Sum-of-squares optimization without semidefinite programming.
\newblock {\em SIAM Journal on Optimization}, 29(1):822--851, 2019.

\bibitem{pauwels2016linear}
Edouard Pauwels, Didier Henrion, and Jean-Bernard Lasserre.
\newblock Linear conic optimization for inverse optimal control.
\newblock {\em SIAM Journal on Control and Optimization}, 54(3):1798--1825,
  2016.

\bibitem{pmlr-v70-pinto17a}
Lerrel Pinto, James Davidson, Rahul Sukthankar, and Abhinav Gupta.
\newblock Robust adversarial reinforcement learning.
\newblock In Doina Precup and Yee~Whye Teh, editors, {\em Proceedings of the
  34th International Conference on Machine Learning}, volume~70 of {\em
  Proceedings of Machine Learning Research}, pages 2817--2826. PMLR, 06--11 Aug
  2017.

\bibitem{putinar1993positive}
Mihai Putinar.
\newblock Positive polynomials on compact semi-algebraic sets.
\newblock {\em Indiana University Mathematics Journal}, 42(3):969--984, 1993.

\bibitem{rickenbach2024inverse}
Rahel Rickenbach, Anna Scampicchio, and Melanie~N Zeilinger.
\newblock Inverse optimal control as an errors-in-variables problem.
\newblock In {\em 6th Annual Learning for Dynamics \& Control Conference},
  pages 375--386. PMLR, 2024.

\bibitem{Rodrigues2022Inverse}
Luis Rodrigues.
\newblock Inverse optimal control with discount factor for continuous and
  discrete-time control-affine systems and reinforcement learning.
\newblock In {\em 2022 IEEE 61st Conference on Decision and Control (CDC)},
  pages 5783--5788, 2022.

\bibitem{ross2011reduction}
St{\'e}phane Ross, Geoffrey Gordon, and Drew Bagnell.
\newblock A reduction of imitation learning and structured prediction to
  no-regret online learning.
\newblock In {\em Proceedings of the fourteenth international conference on
  artificial intelligence and statistics}, pages 627--635. JMLR Workshop and
  Conference Proceedings, 2011.

\bibitem{rudin1987real}
Walter Rudin.
\newblock {\em Real and complex analysis}.
\newblock McGraw-Hill, Inc., 1987.

\bibitem{sammut2017behavioral}
Claude Sammut.
\newblock Behavioral cloning.
\newblock In Claude Sammut and Geoffrey~I. Webb, editors, {\em Encyclopedia of
  Machine Learning and Data Mining}, pages 120--124. Springer, Boston, MA, USA,
  2017.

\bibitem{sauer1995multivariate}
Thomas Sauer and Yuan Xu.
\newblock On multivariate {L}agrange interpolation.
\newblock {\em Mathematics of computation}, 64(211):1147--1170, 1995.

\bibitem{savorgnan2009discrete}
Carlo Savorgnan, Jean~B Lasserre, and Moritz Diehl.
\newblock Discrete-time stochastic optimal control via occupation measures and
  moment relaxations.
\newblock In {\em Proceedings of the 48h IEEE Conference on Decision and
  Control (CDC) held jointly with 2009 28th Chinese Control Conference}, pages
  519--524. IEEE, 2009.

\bibitem{Schultheis2021Inverse}
Matthias Schultheis, Dominik Straub, and Constantin~A Rothkopf.
\newblock Inverse optimal control adapted to the noise characteristics of the
  human sensorimotor system.
\newblock In {\em Advances in Neural Information Processing Systems},
  volume~34, pages 9429--9442, 2021.

\bibitem{Skajaa2015pdinterior}
Anders Skajaa and Yinyu Ye.
\newblock A homogeneous interior-point algorithm for nonsymmetric convex conic
  optimization.
\newblock {\em Mathematical Programming}, 150(2):391--422, 2015.

\bibitem{Sommariva2009FeketePoints}
Alvise Sommariva and Marco Vianello.
\newblock Computing approximate {Fekete} points by {QR} factorizations of
  {Vandermonde} matrices.
\newblock {\em Computers \& Mathematics with Applications}, 57(8):1324--1336,
  2009.

\bibitem{torabi2018behavioralcloningobservation}
Faraz Torabi, Garrett Warnell, and Peter Stone.
\newblock Behavioral cloning from observation.
\newblock In {\em Proceedings of the 27th International Joint Conference on
  Artificial Intelligence}, page 4950–4957, 2018.

\bibitem{van1998asymptotic}
Adrianus~W. van~der Vaart.
\newblock {\em Asymptotic statistics}.
\newblock Cambridge university press, Cambridge, United Kingdom, 1998.

\bibitem{wellner2013weak}
Jon Wellner et~al.
\newblock {\em Weak convergence and empirical processes: with applications to
  statistics}.
\newblock Springer Science \& Business Media, 2013.

\bibitem{yu2021system}
Chengpu Yu, Yao Li, Hao Fang, and Jie Chen.
\newblock System identification approach for inverse optimal control of
  finite-horizon linear quadratic regulators.
\newblock {\em Automatica}, 129:109636, 2021.

\bibitem{zare2024survey}
Maryam Zare, Parham~M Kebria, Abbas Khosravi, and Saeid Nahavandi.
\newblock A survey of imitation learning: Algorithms, recent developments, and
  challenges.
\newblock {\em IEEE Transactions on Cybernetics}, 2024.

\bibitem{zhang2019inverseCDC}
Han Zhang, Yibei Li, and Xiaoming Hu.
\newblock Inverse optimal control for finite-horizon discrete-time linear
  quadratic regulator under noisy output.
\newblock In {\em 2019 IEEE 58th Conference on Decision and Control (CDC)},
  pages 6663--6668. IEEE, 2019.

\bibitem{zhang2021inverse}
Han Zhang and Axel Ringh.
\newblock Inverse linear-quadratic discrete-time finite-horizon optimal control
  for indistinguishable homogeneous agents: A convex optimization approach.
\newblock {\em Automatica}, 148:110758, 2023.

\bibitem{zhang2024inverse}
Han Zhang and Axel Ringh.
\newblock Inverse optimal control for averaged cost per stage linear quadratic
  regulators.
\newblock {\em Systems \& Control Letters}, 183:105658, 2024.

\bibitem{zhang2024statistically}
Han Zhang and Axel Ringh.
\newblock Statistically consistent inverse optimal control for discrete-time
  indefinite linear--quadratic systems.
\newblock {\em Automatica}, 166:111705, 2024.

\bibitem{zhang2022statistically}
Han Zhang, Axel Ringh, Weihan Jiang, Shaoyuan Li, and Xiaoming Hu.
\newblock Statistically consistent inverse optimal control for linear-quadratic
  tracking with random time horizon.
\newblock In {\em 2022 41st Chinese Control Conference (CCC)}, pages
  1515--1522, 2022.

\bibitem{zhang2019inverse}
Han Zhang, Jack Umenberger, and Xiaoming Hu.
\newblock Inverse optimal control for discrete-time finite-horizon linear
  quadratic regulators.
\newblock {\em Automatica}, 110:108593, 2019.

\bibitem{zhou2018Infinite}
Zhengyuan Zhou, Michael Bloem, and Nicholas Bambos.
\newblock Infinite time horizon maximum causal entropy inverse reinforcement
  learning.
\newblock {\em IEEE Transactions on Automatic Control}, 63(9):2787--2802, 2018.

\bibitem{ziebart2008maximum}
Brian~D Ziebart, Andrew~L Maas, J~Andrew Bagnell, Anind~K Dey, et~al.
\newblock Maximum entropy inverse reinforcement learning.
\newblock In {\em Aaai}, volume~8, pages 1433--1438. Chicago, IL, USA, 2008.

\end{thebibliography}

\end{document}